\newtheorem{theorem}{Theorem}[section]
\newtheorem{lemma}[theorem]{Lemma}
\newtheorem{proposition}[theorem]{Proposition}
\newtheorem{corollary}[theorem]{Corollary}
\title{On the stability of minimal cones in warped products}
\author{K. S. Bezerra}
\address{Departamento de Matem\'atica,
Universidade Federal do Piau\'{\i}, Teresina, Piau\'{\i}, Brazil. 64049-550}
\email{kelton@ufpi.edu.br}
\author{A. Caminha}
\address{Departamento de Matem\'atica, Universidade Federal do Cear\'a, Fortaleza,
Cear\'a, Brazil. 60455-760}
\email{caminha@mat.ufc.br}
\author{B. P. Lima}
\address{Departamento de Matem\'atica,
Universidade Federal do Piau\'{\i}, Teresina, Piau\'{\i}, Brazil. 64049-550}
\email{barnabe@ufpi.edu.br}
\subjclass[2010]{Primary: 53C42. Secondary: 53C12.}
\keywords{Minimal submanifolds; Simons' formula; stability of cones.}
\thanks{The second author is partially supported by CNPq. The third author is partially supported by Procad-CNPq.}
\begin{document}

\maketitle

\begin{abstract}
In a seminal paper published in $1968$, J. Simons proved that, for $n\leq 5$, the Euclidean (minimal) cone $CM$, built on a closed, oriented, minimal and non totally geodesic hypersurface
$M^n$ of $\mathbb S^{n+1}$ is unstable. In this paper, we extend Simons' analysis to {\em warped} (minimal) cones built over a closed, oriented, minimal hypersurface 
of a leaf of suitable warped product spaces. Then, we apply our general results to the particular case of the warped product model of the Euclidean sphere, and establish
the unstability of $CM$, whenever $2\leq n\leq 14$ and $M^n$ is a closed, oriented, minimal and non totally geodesic hypersurface of $\mathbb S^{n+1}$.
\end{abstract}

\section{Introduction}

In $1968$, J. Simons (cf. \cite{Simons:1968}) generalized a theorem of F. J. Almgren, Jr. (cf. \cite{Almgren:1966}), showing that, for $n\leq 5$,
the Euclidean cone built over any closed, minimal and non totally geodesic hypersurface of $\mathbb S^{n+1}$ is a minimal unstable hypersurface 
of $\mathbb R^{n+2}$.

If $M^n$ is a hypersurface of $\mathbb S^{n+1}$, the Euclidean cone $CM$ over $M$ is given by the immersion
$\Phi:M^n\times(0,1]\rightarrow\mathbb R^{n+2}$, that sends $(p,t)$ to $tp$. For $0<\epsilon<1$, the $\epsilon$-truncated cone $C_{\epsilon}M$ over $M$ is the restriction of $\Phi$ to $M\times[\epsilon,1]$. 
In \cite{Simons:1968}, it is shown that, if $M^n$ is a closed minimal hypersurface of $\mathbb S^{n+1}$, then $CM\setminus\{0\}$ is 
a minimal hypersurface of $\mathbb R^{n+2}$; moreover, $C_{\epsilon}M$ is compact and such that $\partial (C_{\epsilon}M)=M\cup M_{\epsilon}$, 
where $M_{\epsilon}=\{\epsilon p;\,p\in M\}$.   

In \cite{Caminha:2011}, the second author extended this notion in the following way: let $\overline M_c^{n+2}$ be a Riemannian manifold
whose sectional curvature is constant and equal to $c$. Assume that $\overline M$ admits a closed conformal vector field
$\xi\in\mathfrak X(\overline M)$, with conformal factor $\psi_{\xi}$. If $\xi\neq 0$ on $\overline M$, it is well known that the 
distribution $\{\xi\}^{\perp}$ is integrable, with leaves totally umbilical in $\overline M$. Let $\Xi^{n+1}$ be such a leaf and 
$\varphi:M^n\rightarrow\Xi^{n+1}$ be a closed hypersurface $\Xi$. If $\Psi$ denotes the flow of $\xi/\|\xi\|$, the
compactness of $M$ guarantees the existence of $\epsilon>0$ such that $\Psi$ is well defined in $[-\epsilon,0]\times\varphi(M)$, and the 
mapping $\Phi:M^n\times[-\epsilon,0]\rightarrow\overline M^{n+2}$ that sends $(p,t)$ to $\Psi(t,\varphi(p))$
is also an immersion. By furnishing $M^n\times[-\epsilon,0]$ with the induced metric, we turn $\Phi$ into an isometric immersion such that
$\Phi_{|M^n\times\{0\}}=\varphi$; the Riemannian manifold $M^n\times[-\epsilon,0]$, is the $\epsilon$-truncated cone $C_{\epsilon}M$ over $M$, 
in the direction of $\xi$.

When $\overline M^{n+2}_c$ is a warped product $I\times_fF^{n+1}$, with $I\subset\mathbb R$, it is well known that 
$\xi=(f\circ\pi_I)\partial_t$ ($\pi_I:\overline M\rightarrow I$ being the canonical projection) is a closed conformal vector field on 
$\overline M$, with conformal factor $\psi_{\xi}=f'\circ\pi_I$. If we ask that $f(0)=1$, then $\Xi=\{0\}\times F$, furnished with the 
induced metric, is a leaf of the distribution $\{\xi\}^{\bot}$ and is isometric to $F$. Hence, one can identify an isometric immersion
$\varphi:M^n\rightarrow F^{n+1}$ with the isometric immersion $\widetilde\varphi(p)=(0,\varphi(p))$, from $M^n$ into $\Xi=\{0\}\times F$. 
The flux of $\xi/\|\xi\|$ is given by $\Psi(t,(x,p))=(t+x,p)$, so that $\Phi(p,t)=\Psi(t,\widetilde\varphi(p))=(t,\varphi(p))$ and
$C_{\epsilon}M$ can be identified to the immersion $\Phi:M^n\times[-\epsilon,0]\rightarrow I\times_fF^{n+1}$, that sends $(p,t)$ to
$(t,\varphi(p))$.

Our goal in this paper is to analyse the stability of $C_{\epsilon}M$ when $M^n$ is a closed minimal hypersurface of $F^{n+1}$. 
In doing so, we extend a result of Simons (cf. \cite{Simons:1968}), proving the following assertion (cf. Theorem \ref{instabilidade} and 
equations (\ref{operador l um}) and (\ref{operador l dois})).\\

{\noindent\bf Theorem.}
{\em In the above setting, $C_{\epsilon}M$ is unstable if, and only if, $\lambda_1+\delta_1<0$,
where $\lambda_1$ and $\delta_1$ are, respectively, the first eigenvalues of the linear differential operators 
$\mathcal L_1:C^{\infty}(M)\rightarrow C^{\infty}(M)$ and $\mathcal L_2:C_0^{\infty}[-\epsilon,0]\rightarrow C^{\infty}[-\epsilon,0]$, given by
$$\mathcal L_1(g)=-\Delta g-\|A\|^2g\ \ \text{and}\ \ \mathcal L_2(h)=-f^2h''-nff'h'-c(n+1)f^2h,$$
for $g\in C^{\infty}(M)$ and $h\in C_0^{\infty}[-\epsilon,0]$.}\\

{\noindent Here, as in \cite{Simons:1968}, $C_0^{\infty}[-\epsilon,0]=\{h\in C^{\infty}[-\epsilon,0];\,h(-\epsilon)=h(0)=0\}$.}

Then, we specialize our discussion to the case of {\em spherical cones}. More precisely, we let $\mathbb S^{n+1}$ be the 
equator of $\mathbb S^{n+2}$ with respect to the North pole $N=(0,1)$ of $\mathbb S^{n+2}$, and identify each 
$x\in\mathbb S^{n+1}$ with $(x,0)\in\mathbb S^{n+2}$; if we let $\overline M=\left(-\frac{\pi}{2},\frac{\pi}{2}\right)\times_{\cos t}\mathbb S^{n+1}$, 
then the mapping $(t,x)\mapsto(\cos t)x+(\sin t)N$ defines an isometry between $\overline M$ and $\mathbb S^{n+2}\setminus\{\pm N\}$. 
The $\epsilon-$truncated cone $C_{\epsilon}M$ in $\mathbb S^{n+2}$, built over a closed minimal hypersurface $M^n$ of $\mathbb S^{n+1}$ is 
given by the immersion $\Phi:M^n\times[-\epsilon,0]\rightarrow\mathbb S^{n+2}$ that maps $(x,t)$ to $(\cos t)x+(\sin t)N$. In this setting, we finish
the paper by proving the following result (cf. Theorem \ref{extensao}).\\

{\noindent\bf Theorem.}
{\em Let $M^n$ be a closed, oriented minimal hypersurface of $\mathbb S^{n+1}$. If $2\leq n\leq 14$ and $M^n$ is not totally geodesic, then $CM$ 
is a minimal unstable hypersurface of $\mathbb S^{n+2}$.}

\section{On foliations generated by closed conformal vector fields}\label{preliminares}

In what follows, $\overline M_{c}^{n+k+1}$ is an $(n+k+1)$-dimensional Riemaniann manifold, whose sectional curvature is constant and equal 
to $c$. We assume that $\overline M$ is furnished with a nontrivial closed conformal vector field $\xi$, i.e., 
$\xi\in\mathfrak X(M)\setminus\{0\}$ is such that $\overline\nabla_X\xi=\psi_{\xi}X$, for all $X\in\mathfrak X(M)$, where 
$\psi_{\xi}:\overline M\rightarrow\mathbb R$ is a smooth function, said to be the 
{\em conformal factor} of $\xi$, and $\overline\nabla$ denotes the Levi-Civita connection of $\overline M$. 

From now on, the condition that $\xi\neq 0$ on $\overline M$ will be in force. It is immediate to check (cf. \cite{Caminha:2011}) 
that the distribution $\{\xi^{\perp}\}$ is integrable, with leaves totally umbilical in $\overline M$. Let $\Xi^{n+k}$ be a leaf of such 
distribution, $M^n$ be a closed, $n$-dimensional Riemannian manifold and $\varphi:M^n\rightarrow\Xi^{n+k}$ be an isometric immersion. If 
we let $\Psi(t,\cdot)$ denote the flow of the vector field $\xi/\|\xi\|$, the compactness of $M$ assures that we can choose $\epsilon>0$ 
such that
the map
\begin{eqnarray}\label{equacao cone}
\begin{array}{rccc}
\Phi:&M^n\times[-\epsilon,0]&\longrightarrow&\overline M^{n+k+1}\\
&(p,t)&\longmapsto&\Psi(t,\varphi(p))
\end{array}
\end{eqnarray}
is an immersion. The \textbf{$\epsilon$-truncated cone over $M$, in the direction of $\xi$}, which will be henceforth denoted by 
$C_{\epsilon}M$, is the manifold with boundary $M^n\times[-\epsilon,0]$, furnished with the metric induced by $\Phi$. We observe that 
$C_{\epsilon}M$ is a compact, immersed submanifold of $\overline{M}_{c}^{n+k+1}$, such that $\partial(C_{\epsilon}M)=M\cup M_{\epsilon}$, 
where $M_{\epsilon}=\{\Psi(-\epsilon,\varphi(p));p\in M\}$. At times, if there is no danger of confusion, we shall refer simply to the 
$\epsilon$-truncated cone $C_{\epsilon}M$.

From now on, we will frequently refer to the smooth function $\lambda:M\times[-\epsilon,0]\rightarrow\mathbb{R}$, given by
\begin{equation}\label{eq:funcao lambda}
\lambda(q,t)=\exp\left(\int_{0}^{t}\frac{\psi_{\xi}}{\|\xi\|}(\Psi(s,\varphi(q)))ds\right).
\end{equation}
The following result relates the second fundamental form of $C_{\epsilon}M$ at distinct points along the same generatrix of the cone.

\begin{proposition}\label{segunda forma}
Let $A^{\eta}_q$ denote the shape operator of $\varphi$ at $q$, in the direction of the unit vector $\eta$, normal to $T_qM$ in $T_q\Xi$. Let 
$N$ denote the parallel transport of $\eta$ along the integral curve of $\xi/\|\xi\|$ that passes through $q$. If $A^N_{(q,t)}$ denotes the 
shape operator of $\Phi$ at the point $(q,t)$, in the direction of $N_{(q,t)}$, then
$$\|A^N_{(q,t)}\|=\frac{1}{\lambda(q,t)}\|A^{\eta}_q\|.$$
\end{proposition}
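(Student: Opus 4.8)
The plan is to work along a single generatrix, i.e. to fix $q\in M$ and study how the shape operator $A^N$ evolves along the curve $t\mapsto(q,t)$. First I would set up adapted coordinates: pick a local orthonormal frame $e_1,\dots,e_n$ on $M$ near $q$, and let $E_1(t),\dots,E_n(t)$ be obtained by differentiating the flow $\Psi$, so that $\Phi_*(e_i)$ and $\Phi_*(\partial_t)=\xi/\|\xi\|$ span the tangent space of $C_\epsilon M$ along the generatrix. The key metric fact is that, because the leaves $\{\xi\}^\perp$ are totally umbilical with $\overline\nabla_X\xi=\psi_\xi X$, the vectors $E_i(t)$ are rescaled copies of the $E_i(0)=\varphi_*(e_i)$ by precisely the factor $\lambda(q,t)$ from \eqref{eq:funcao lambda}; this is the infinitesimal content of the conformal-factor equation and is essentially the computation already underlying \eqref{eq:funcao lambda}. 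Since $N$ is defined by parallel transport along $\xi/\|\xi\|$, it stays a unit normal to $C_\epsilon M$ along the generatrix, so $A^N_{(q,t)}$ is well defined for all $t$.

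Next I would compute the matrix entries $\langle A^N_{(q,t)}\Phi_*(e_i),\Phi_*(e_j)\rangle = -\langle\overline\nabla_{\Phi_*(e_i)}N,\Phi_*(e_j)\rangle$. The point is that, at $t=0$, the immersion $\Phi$ restricts to $\varphi$ into the leaf $\Xi$, and $N_{(q,0)}=\eta$ is tangent to $\Xi$; since $\Xi$ is totally umbilical in $\overline M$, the $\overline M$-shape operator of $\Phi$ in the direction $\eta$ differs from $A^\eta_q$ only by a term proportional to the umbilicity factor of $\Xi$, which is symmetric and hence contributes to the trace part but, more to the point, I only need the \emph{norm}. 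Actually the cleanest route is: differentiate the relation defining $A^N$ in $t$. Writing $h_{ij}(t)=\langle\overline\nabla_{E_i(t)}N,E_j(t)\rangle$ (the unnormalized second fundamental form) and using that $N$ is $\overline\nabla_{\partial_t}$-parallel together with the conformal equation $\overline\nabla_{\partial_t}E_i = \frac{\psi_\xi}{\|\xi\|}E_i$ (the Jacobi-type equation for the flow), one gets $h_{ij}'(t) = \frac{\psi_\xi}{\|\xi\|}\,h_{ij}(t)$, so $h_{ij}(t)=\lambda(q,t)\,h_{ij}(0)$. On the other hand $|E_i(t)|^2=\lambda(q,t)^2$, so the entries of the normalized shape operator $A^N_{(q,t)}$ in the orthonormal frame $E_i/|E_i|$ are $h_{ij}(t)/\lambda(q,t)^2 = h_{ij}(0)/\lambda(q,t)$. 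Taking the Hilbert–Schmidt norm, $\|A^N_{(q,t)}\|^2 = \lambda(q,t)^{-2}\sum_{i,j}h_{ij}(0)^2 = \lambda(q,t)^{-2}\|A^\eta_q\|^2$, which is the claim.

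The step I expect to be the main obstacle — and which I would carry out carefully — is establishing the evolution equation $\overline\nabla_{\partial_t}E_i = \tfrac{\psi_\xi}{\|\xi\|}E_i$ along the generatrix, i.e. showing that the coordinate vector fields $E_i$ genuinely get rescaled by $\lambda$ and do not pick up a component along $\xi/\|\xi\|$ or rotate. This requires commuting $\overline\nabla_{\partial_t}$ with $\overline\nabla_{e_i}$ using $[\partial_t, e_i]=0$ (coordinate fields on $M\times[-\epsilon,0]$), invoking $\overline\nabla_{E_i}\tfrac{\xi}{\|\xi\|}$ and the closed-conformal hypothesis $\overline\nabla_X\xi=\psi_\xi X$ to control $\overline\nabla_{E_i}\tfrac{\xi}{\|\xi\|}=\tfrac{\psi_\xi}{\|\xi\|}E_i^{\top}+(\text{term along }\xi)$, and checking that the component along $\xi$ is killed by the initial condition (at $t=0$ the $E_i$ are tangent to the leaf, hence $\xi$-orthogonal) together with a uniqueness argument for the resulting linear ODE system. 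Once that structural lemma is in place, everything else is the routine bookkeeping sketched above, and one also checks $N$ remains normal to the whole of $C_\epsilon M$ (not just to $M$) because $\langle N, E_i(t)\rangle' = \langle N, \overline\nabla_{\partial_t}E_i\rangle = \tfrac{\psi_\xi}{\|\xi\|}\langle N,E_i(t)\rangle$ vanishes identically from its zero initial value, and $\langle N,\xi/\|\xi\|\rangle\equiv 0$ similarly.
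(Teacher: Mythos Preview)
Your approach is a close variant of the paper's: both fix a generatrix $t\mapsto(q,t)$, choose a moving frame along it, derive a first-order linear ODE for the entries of the second fundamental form, and integrate. The paper takes the $E_i$ to be the \emph{parallel transports} of the $e_i$ along $\xi/\|\xi\|$ (so the $E_i$ stay orthonormal and the ODE reads $\frac{d}{dt}\langle\overline\nabla_{E_i}N,E_k\rangle=-\frac{\psi_\xi}{\|\xi\|}\langle\overline\nabla_{E_i}N,E_k\rangle$, giving shape-operator entries $\sim\lambda^{-1}$ directly), whereas you take the \emph{coordinate pushforwards} $E_i=\Phi_*e_i$ (which scale by $\lambda$, so your $h_{ij}$ grows like $\lambda$ and you renormalize at the end). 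Either frame works and the bookkeeping is equivalent.

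There is, however, a real gap in your outline. Your claimed ODE $h_{ij}'=\frac{\psi_\xi}{\|\xi\|}h_{ij}$ comes from differentiating $\langle\overline\nabla_{E_i}N,E_j\rangle$ in $t$; doing this honestly requires commuting $\overline\nabla_{\partial_t}$ past $\overline\nabla_{E_i}$, which produces the curvature term $\langle\overline R(\partial_t,E_i)N,E_j\rangle$ (here $[\partial_t,E_i]=0$ and $\overline\nabla_{\partial_t}N=0$ kill the other pieces). That curvature term vanishes \emph{precisely because $\overline M$ has constant sectional curvature}, together with $\langle N,\xi\rangle=\langle E_j,\xi\rangle=0$; yet this hypothesis appears nowhere in your argument. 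This is exactly the step the paper isolates and where it explicitly invokes constant curvature. Relatedly, you have mislocated the ``main obstacle'': the evolution $\overline\nabla_{\partial_t}E_i=\frac{\psi_\xi}{\|\xi\|}E_i$ is immediate from $\overline\nabla_{\partial_t}E_i=\overline\nabla_{E_i}(\xi/\|\xi\|)$, the closed-conformal equation, and the easy ODE argument giving $\langle E_i,\xi\rangle\equiv 0$---no curvature needed there. Finally, to conclude that $\|A^N_{(q,t)}\|^2=\lambda^{-2}\sum_{i,j}h_{ij}(0)^2$ you must also record $A^N(\xi/\|\xi\|)=-(\overline\nabla_{\xi/\|\xi\|}N)^\top=0$, since $A^N$ acts on the full $(n+1)$-dimensional tangent space of $C_\epsilon M$; this follows from the parallelism of $N$ but should be stated.
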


\begin{proof}
Fix a point $p\in M$ and, in a neighborhood $\Omega\subset M$ of $p$, an orthonormal set $\{e_1,\ldots,e_n,\eta\}$ of vector fields, with $e_1,\ldots,e_n$ tangent to $M^n$ and $\eta$ normal to $M^n$ in 
$\Xi^{n+k}$. Further, ask that
$A^{\eta}_p(e_i)=\lambda_ie_i(p)$, for $1\leq i\leq n$. Let $E_1,\ldots,E_n,N$ be the vector fields on $\Phi(\Omega\times(-\epsilon,0])$, respectively obtained from $e_1,\ldots,e_n$ and $\eta$ by 
parallel transport along the integral curves of $\frac{\xi}{\|\xi\|}$ that intersect $\Omega$. 

If we let $\overline R$ denote the curvature operator of $\overline M$ and use the fact that $\overline M$ has constant sectional curvature, such a parallelism gives
\begin{eqnarray}\label{derivada da conexao}
  \nonumber\frac{d}{dt}\langle\overline{\nabla}_{E_{i}}N,E_{k}\rangle &=& \langle\overline{\nabla}_{\frac{\xi}{|\xi|}}\overline{\nabla}_{E_{i}}N,E_{k}\rangle  \\
   \nonumber&=& \frac{1}{\|\xi\|}[\langle
 \overline{R}(\xi,E_{i})N,E_{k}\rangle+\langle\overline{\nabla}_{E_{i}}\overline{\nabla}_{\xi}N,E_{k}\rangle+\langle\overline{\nabla}_{[\xi,E_{i}]}N,E_{k}\rangle]\\
   &=& \frac{1}{\|\xi\|}[\langle
   \overline{R}(\xi,E_{i})N,E_{k}\rangle
   -\langle\overline{\nabla}_{\overline{\nabla}_{E_{i}}\xi}N,E_{k}\rangle] \\
   \nonumber &=&-\frac{\psi_{\xi}}{\|\xi\|}\langle\overline{\nabla}_{E_{i}}N,E_{k}\rangle.
\end{eqnarray}
Moreover, if $D$ denotes the Levi-Civita connection of $\Xi^{n+k}$, then
\begin{equation}\label{conexao no zero}
  \langle\overline{\nabla}_{E_{i}}N,E_{k}\rangle_{(p,0)}=\langle D_{e_{i}}\eta,e_{k}\rangle_{p}=-\langle A^{\eta}(e_{i}),e_{k}\rangle_{p}=-\lambda_{i}\delta_{ik}.
\end{equation}

Equations (\ref{derivada da conexao}) and (\ref{conexao no zero}) compose a Cauchy problem, whose solution is 
$$\langle\overline\nabla_{E_i}N,E_i\rangle_{(p,t)}=-\lambda_i\exp\left(-\int_0^t\frac{\psi_{\xi}}{\|\xi\|}(\varphi(p),s)ds\right)=\frac{-\lambda_i}{\lambda(p,t)}$$
and, for $k\neq i$,
$$\langle\overline\nabla_{E_i}N,E_k\rangle_{(p,t)}=0,$$
for all $t\in(-\epsilon,0]$.

Since $\langle\overline\nabla_{E_i}N,\xi\rangle=-\langle N,\overline\nabla_{E_i}\xi\rangle=-\psi_{\xi}\langle N,E_i\rangle=0$, it follows from 
the previous formulae that, at the point $(p,t)$,
$$A^N(E_i)=-(\overline\nabla_{E_i}N)^{\top}=-\sum_{k=1}^n\langle\overline\nabla_{E_i}N,E_k\rangle E_k-\langle\overline\nabla_{E_i}N,\frac{\xi}{\|\xi\|}\rangle\frac{\xi}{\|\xi\|}=\frac{\lambda_i}{\lambda}E_i,$$
for $1\leq i\leq n$. Finally, taking into account that $A^N(\frac{\xi}{\|\xi\|})=-(\overline{\nabla}_{\frac{\xi}{\|\xi\|}}N)^{\top}=0$, we get
$$\|A^N_{(p,t)}\|^2=\sum_{i=1}^n\left(\frac{\lambda_i}{\lambda(p,t)}\right)^2=\frac{1}{\lambda^2(p,t)}\|A^{\eta}_p\|^2.$$
\end{proof}

\begin{corollary}\label{cone minimo}
The $\epsilon-$truncated cone $C_{\epsilon}M$ is minimal in $\overline M$ if, and only if, $M$ is minimal in $\Xi$.
\end{corollary}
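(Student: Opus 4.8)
The plan is to squeeze a little more out of the proof of Proposition~\ref{segunda forma} than its statement gives. The norm identity $\|A^N_{(q,t)}\|=\lambda(q,t)^{-1}\|A^{\eta}_q\|$ really detects total geodesy rather than minimality, so instead I would extract the underlying pointwise \emph{operator} identity hidden in the Cauchy problem of that proof and then take traces.

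First I would fix the normal geometry. For $q\in M$, pick an orthonormal frame $\eta_1,\dots,\eta_k$ spanning the orthogonal complement of $T_qM$ in $T_q\Xi$, and parallel-transport each $\eta_a$ along the integral curves of $\xi/\|\xi\|$ to vector fields $N_a$. I claim $N_1,\dots,N_k$ is an orthonormal frame of the normal bundle of $C_{\epsilon}M$: these fields stay orthonormal because parallel transport is an isometry, they stay orthogonal to the fields $E_i$ (parallel transports of a tangent frame of $M$) since $\langle N_a,E_i\rangle$ is constant along each integral curve and vanishes at $t=0$, and they stay orthogonal to the flow direction because $\frac{d}{dt}\langle N_a,\xi/\|\xi\|\rangle=\langle N_a,\overline\nabla_{\xi/\|\xi\|}(\xi/\|\xi\|)\rangle=0$, the last equality holding because $\overline\nabla_{\xi}\xi=\psi_{\xi}\xi$ forces $\overline\nabla_{\xi/\|\xi\|}(\xi/\|\xi\|)$ to be proportional to $\xi$. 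Since $\{E_1,\dots,E_n,\xi/\|\xi\|\}$ spans $TC_{\epsilon}M$, nothing more is needed here.

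Next I would note that equation~(\ref{derivada da conexao}) is valid for every pair of indices $i,k$ with no appeal to any diagonalization of $A^{\eta}_q$, so the entire matrix $\big(\langle\overline\nabla_{E_i}N_a,E_k\rangle\big)$ solves the same scalar Cauchy problem, and hence at $(q,t)$
$$\langle\overline\nabla_{E_i}N_a,E_k\rangle_{(q,t)}=\frac{1}{\lambda(q,t)}\langle\overline\nabla_{E_i}N_a,E_k\rangle_{(q,0)}=-\frac{1}{\lambda(q,t)}\langle A^{\eta_a}_q(e_i),e_k\rangle.$$
Combined with $\langle\overline\nabla_{E_i}N_a,\xi\rangle=0$ and $A^{N_a}(\xi/\|\xi\|)=0$, both already recorded in the proof of Proposition~\ref{segunda forma}, this says that $A^{N_a}_{(q,t)}=\lambda(q,t)^{-1}A^{\eta_a}_q$ on $\operatorname{span}\{E_1,\dots,E_n\}$ and that $A^{N_a}_{(q,t)}$ annihilates $\xi/\|\xi\|$. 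Taking traces and summing over $a$,
$$\vec{H}^{\,C_{\epsilon}M}_{(q,t)}=\sum_{a=1}^{k}\big(\operatorname{tr}A^{N_a}_{(q,t)}\big)N_a=\frac{1}{\lambda(q,t)}\sum_{a=1}^{k}\big(\operatorname{tr}A^{\eta_a}_q\big)N_a.$$
Since $\lambda>0$ everywhere and $N_1,\dots,N_k$ are linearly independent, $\vec{H}^{\,C_{\epsilon}M}$ vanishes identically on $M\times[-\epsilon,0]$ if and only if $\operatorname{tr}A^{\eta_a}_q=0$ for all $a$ and all $q\in M$, i.e. if and only if $\vec{H}^{\,\varphi}\equiv0$, which is the desired equivalence.

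The only point that needs care is the one flagged at the outset: the statement of Proposition~\ref{segunda forma} is by itself too coarse for this purpose, so one must re-enter its proof and retain the full linear relation $A^{N_a}=\lambda^{-1}A^{\eta_a}$; once that is in hand, identifying the normal frame and invoking linearity of the trace is routine bookkeeping.
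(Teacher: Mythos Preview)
Your proof is correct and follows essentially the same route as the paper: extract from the proof of Proposition~\ref{segunda forma} the operator identity $A^{N_a}_{(q,t)}=\lambda(q,t)^{-1}A^{\eta_a}_q$ (together with $A^{N_a}(\xi/\|\xi\|)=0$), then trace and sum over a normal frame to compare mean curvature vectors. The paper compresses all of this into the single assertion $\|H_{(p,t)}\|=\lambda^{-1}\|H_p\|$, whereas you spell out the normal-frame verification and the trace argument; your observation that the bare \emph{statement} of Proposition~\ref{segunda forma} does not suffice is a fair point, and your expanded argument makes explicit exactly the steps the paper leaves to the reader.
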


\begin{proof}
If we let $H_{(p,t)}$ be the mean curvature vector of $\Phi$ at $(p,t)$, and $H_p$ be that of $\varphi$ at $p$, it follows from the previous result that 
$\|H_{(p,t)}\|=\frac{1}{\lambda}\|H_p\|$. This proves the corollary.
\end{proof}

The following technical result, which is an adapted version of Theorem $4.1$ of \cite{Caminha:2011}, will be quite useful in the proof of Proposition \ref{laplaciano F}.
In order to state it properly, we let $\nabla$ denote the Levi-Civita connection of $C_{\epsilon}M$. 

\begin{lemma}\label{lema conexao}
Fix $p\in M$ and, in a neighborhood $\Omega$ of $p$ in $M$, an orthonormal frame $(e_1,\ldots,e_n)$, geodesic at $p$. If $E_1,\ldots,E_n$ are 
the vector fields on $\Phi(\Omega\times(-\epsilon,0])$, respectively obtained from $e_1,\ldots,e_n$ by parallel transport along the integral 
curves of $\xi/\|\xi\|$ that intersect $\Omega$, then
\begin{eqnarray}\label{conexao cone}
  \nabla_{E_i}E_i=-\frac{\psi_{\xi}}{\|\xi\|^{2}}\xi
\end{eqnarray}
at $(p,t)$, for all $1\leq i\leq n$.
\end{lemma}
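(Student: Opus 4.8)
The plan is to compute the ambient covariant derivative $\overline\nabla_{E_i}E_i$ along the generatrix through $p$ and then project it orthogonally onto $TC_\epsilon M$, since by definition $\nabla_{E_i}E_i=(\overline\nabla_{E_i}E_i)^{\top}$. The first observation I would record is that $\{E_1,\dots,E_n,\xi/\|\xi\|\}$ is an orthonormal frame for $C_\epsilon M$ along $\Phi(\Omega\times(-\epsilon,0])$: parallel transport along the integral curves of $\xi/\|\xi\|$ keeps the $E_i$ orthonormal, while $\langle E_i,\xi\rangle$ satisfies $\frac{d}{dt}\langle E_i,\xi\rangle=\langle E_i,\overline\nabla_{\xi/\|\xi\|}\xi\rangle=\frac{\psi_\xi}{\|\xi\|}\langle E_i,\xi\rangle$ with $\langle E_i,\xi\rangle=0$ at $t=0$ (because $e_i\in T_pM\subset T_p\Xi$, which is $\xi$-orthogonal), so $\langle E_i,\xi\rangle\equiv 0$; as $\xi$ is tangent to the cone by construction, these $n+1$ fields span $TC_\epsilon M$. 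Hence it suffices to compute the components $\langle\overline\nabla_{E_i}E_i,E_k\rangle$ and $\langle\overline\nabla_{E_i}E_i,\xi/\|\xi\|\rangle$ at $(p,t)$.

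The component along $\xi$ is immediate from closed conformality: $\langle\overline\nabla_{E_i}E_i,\xi\rangle=-\langle E_i,\overline\nabla_{E_i}\xi\rangle=-\psi_\xi\langle E_i,E_i\rangle=-\psi_\xi$, so that $\langle\overline\nabla_{E_i}E_i,\xi/\|\xi\|\rangle=-\psi_\xi/\|\xi\|$. For the components along $E_k$, the case $k=i$ is trivial since $\|E_i\|\equiv 1$, and for $k\neq i$ I would run a Cauchy problem exactly as in the proof of Proposition \ref{segunda forma}. Writing $g_{ik}(t)=\langle\overline\nabla_{E_i}E_i,E_k\rangle$ along the integral curve $t\mapsto\Psi(t,\varphi(p))$, and using $\overline\nabla_{\xi/\|\xi\|}E_j=0$, the identity $[\xi,E_i]=\overline\nabla_\xi E_i-\overline\nabla_{E_i}\xi=-\psi_\xi E_i$, and the constant-curvature formula $\overline R(\xi,E_i)E_i=c(\langle E_i,E_i\rangle\xi-\langle\xi,E_i\rangle E_i)=c\xi\perp E_k$, one gets
$$g_{ik}'=\tfrac{1}{\|\xi\|}\langle\overline R(\xi,E_i)E_i+\overline\nabla_{E_i}\overline\nabla_\xi E_i+\overline\nabla_{[\xi,E_i]}E_i,E_k\rangle=-\tfrac{\psi_\xi}{\|\xi\|}\,g_{ik}.$$
The initial value is $g_{ik}(0)=\langle\overline\nabla_{e_i}e_i,e_k\rangle_p=\langle\nabla^M_{e_i}e_i,e_k\rangle_p=0$, since $e_k$ is tangent to $M$ and the frame $(e_i)$ is geodesic at $p$; hence $g_{ik}\equiv 0$.

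Putting the two computations together, at $(p,t)$ we obtain $\nabla_{E_i}E_i=(\overline\nabla_{E_i}E_i)^{\top}=\sum_k g_{ik}(t)E_k+\langle\overline\nabla_{E_i}E_i,\xi/\|\xi\|\rangle\frac{\xi}{\|\xi\|}=-\frac{\psi_\xi}{\|\xi\|^2}\xi$, which is \eqref{conexao cone}. I expect the only genuinely delicate point to be the bookkeeping already present in Proposition \ref{segunda forma}: making sure the frame fields $E_j$ and their ambient covariant derivatives are legitimately defined along the curve and surface sitting inside the $(n+1)$-dimensional image $\Phi(\Omega\times(-\epsilon,0])$, and correctly identifying the bracket term $[\xi,E_i]$; once these are in place, the statement follows from the two elementary ODEs above.
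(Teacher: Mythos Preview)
Your proof is correct and follows essentially the same route as the paper's: compute $\overline\nabla_{E_i}E_i$ componentwise, obtaining the $\xi$-component directly from closed conformality and the $E_k$-components by solving the linear ODE $g_{ik}'=-(\psi_\xi/\|\xi\|)g_{ik}$ with vanishing initial data at $t=0$ (using that the frame is geodesic at $p$), then project onto $TC_\epsilon M$. The paper additionally computes the normal components $\langle\overline\nabla_{E_i}E_i,N_\beta\rangle$ before projecting, which is unnecessary for the lemma itself; your version is slightly leaner in that respect, and your explicit verification that $\{E_1,\dots,E_n,\xi/\|\xi\|\}$ is an orthonormal frame for $C_\epsilon M$ is a useful addition.
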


\begin{proof}
Choose vector fields $(\eta_1,\ldots,\eta_k)$ on $\Omega$, such that $(e_1,\ldots,e_n,\eta_1,\ldots,\eta_k)$ is an orthonormal frame adapted 
to the isometric immersion $\varphi$. Also, let $N_1,\ldots,N_k$ be the vector fields on $\Phi(\Omega\times(-\epsilon,0])$, respectively 
obtained from $\eta_1,\ldots,\eta_k$ by parallel transport along the integral curves of $\xi/\|\xi\|$ that intersect $\Omega$. Then, the 
orthonormal frame $(E_1,\ldots,E_n,\frac{\xi}{\|\xi\|},N_1,\ldots,N_k)$ on $\Phi(\Omega\times(-\epsilon,0])$ is adapted to the isometric 
immersion $\Phi$.

We shall compute $\overline\nabla_{E_i}E_i$ at $p$ and take its tangential component along $C_{\epsilon}M$. To this end, note first of all that
\begin{equation}
\langle\overline\nabla_{E_i}E_i,\xi\rangle=-\langle E_i,\overline\nabla_{E_i}\xi\rangle=-\psi_{\xi}.
\end{equation}
As before, letting $\overline R$ denote the curvature operator of $\overline M$, it follows from the parallelism of the $E_i$'s, together with the fact that $\overline M$ has constant sectional curvature, that
\begin{equation}\label{derivada um}
 \begin{split}
\frac{d}{dt}\langle\overline{\nabla}_{E_{i}}E_{i},E_{l}\rangle&\,=\frac{1}{\|\xi\|}\langle\overline{\nabla}_{\xi}\overline{\nabla}_{E_{i}}E_{i},E_{l}\rangle\\
&\,=\frac{1}{\|\xi\|}\langle\overline{R}(\xi,E_{i})E_{i}+\overline{\nabla}_{E_{i}}\overline{\nabla}_{\xi}E_{i}+\overline{\nabla}_{[\xi,E_{i}]}E_{i},E_{l}\rangle\\
&\,=\frac{1}{\|\xi\|}(\langle\overline{R}(\xi,E_{i})E_{i},E_{l}\rangle-\langle\overline{\nabla}_{\overline{\nabla}_{E_{i}}\xi}E_{i},E_{l}\rangle)\\
&\,=-\frac{\psi_{\xi}}{\|\xi\|}\langle\overline{\nabla}_{E_{i}}E_{i},E_{l}\rangle,   
 \end{split}
\end{equation}

Also as before, let $D$ and $\nabla$ respectively denote the Levi-Civita connections of $\Xi^{n+k}$ and $M^n$. Since $(e_1,\ldots,e_n)$ is geodesic at $p$ (on $M$), we get
\begin{equation}\label{no ponto}
\langle\overline\nabla_{E_i}E_i,E_l\rangle_p=\langle D_{e_i}e_i,e_l\rangle_p=\langle\nabla_{e_i}e_i,e_l\rangle_p=0.
\end{equation}
Therefore, by solving Cauchy's problem formed by (\ref{derivada um}) and (\ref{no ponto}), we get
\begin{equation}\label{componente tangente}
\langle\overline{\nabla}_{E_i}E_i,E_l\rangle_{(p,t)}=0,
\end{equation}
for $-\epsilon\leq t\leq 0$.

Analogously to (\ref{derivada um}), we obtain
\begin{equation}\label{derivada dois}
\frac{d}{dt}\langle\overline\nabla_{E_i}E_i,N_{\beta}\rangle=-\frac{\psi_{\xi}}{\|\xi\|}\langle\overline\nabla_{E_i}E_i,N_{\beta}\rangle.
\end{equation}
On the other hand, letting $A_{\beta}:T_pM\rightarrow T_pM$ denote the shape operator of $\varphi$ in the direction of $\eta_{\beta}$ and writing $A_{\beta}e_i=\sum_{j=1}^nh_{ij}^{\beta}e_j$, we get
\begin{equation}\label{no ponto dois}
\langle\overline\nabla_{E_i}E_i,N_{\beta}\rangle_p=\langle D_{e_i}e_i,\eta_{\beta}\rangle_p=\langle A_{\beta}e_i,e_i\rangle=h_{ii}^{\beta}.
\end{equation}
Thus, by solving Cauchy's problem formed by (\ref{derivada dois}) and (\ref{no ponto dois}), we arrive at
\begin{equation}\label{componente normal}
\langle\overline\nabla_{E_i}E_i,N_{\beta}\rangle_{(p,t)}=h_{ii}^{\beta}\exp\left(-\int_0^t\frac{\psi_{\xi}}{\|\xi\|}(s)ds\right)=\frac{h_{ii}^{\beta}}{\lambda(p,t)}.
\end{equation}

Finally, a simple computation shows that
\begin{equation}\label{componente xi}
\langle\overline\nabla_{E_i}E_i,\frac{\xi}{\|\xi\|}\rangle=-\frac{\psi_{\xi}}{\|\xi\|}.
\end{equation}
Therefore, it follows from (\ref{componente tangente}), (\ref{componente normal}) and (\ref{componente xi}) that, at the point $(p,t)$, we have
\begin{equation}
\overline\nabla_{E_i}E_i=-\frac{\psi_{\xi}}{\|\xi\|}\frac{\xi}{\|\xi\|}+\frac{1}{\lambda(p,t)}\sum_{\beta=1}^kh_{ii}^{\beta}N_{\beta}.
\end{equation}

From this equality, (\ref{conexao cone}) follows promptly. 
\end{proof}

Given a smooth function $F\in C^{\infty}(C_{\epsilon}M)$ and $t\in[-\epsilon,0]$, we let $F_t\in C^{\infty}(M)$ be the (smooth) function such that $F_t(p)=F(p,t)$, for all $p\in M$. 
The next result relates the Laplacians of $F$ and $F_t$. 

\begin{proposition}\label{laplaciano F}
In the above notations, for $F\in C^{\infty}(C_{\epsilon}M)$, we have
\begin{equation}\nonumber
 \begin{split}
\Delta F(p,t)&\,=\frac{1}{\lambda^2(p,t)}\left(\Delta F_t(p)-\frac{1}{\lambda(p,t)}\langle {\rm grad}(F_t),{\rm grad}(\lambda_t)\rangle_{p}\right)\\
&\,\,\,\,\,\,\,+n\frac{\lambda'(p,t)}{\lambda(p,t)}\frac{\partial F}{\partial t}+\frac{\partial^2F}{\partial t^2},
 \end{split}
\end{equation}
where $\lambda'$ denotes $\frac{\partial\lambda}{\partial t}$ and ${\rm grad}$ denotes gradient in $M$.
\end{proposition}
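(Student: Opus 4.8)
The strategy is to evaluate $\Delta F$ at a fixed point $(p,t)$ by means of an orthonormal frame of $C_{\epsilon}M$ adapted simultaneously to the product structure of $M\times[-\epsilon,0]$ and to the generatrices of the cone, and then to identify the resulting pieces as, respectively, the intrinsic Laplacian of $M$ and the $t$-derivatives.

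Fix $p\in M$ and, exactly as in Lemma \ref{lema conexao}, take an orthonormal frame $(e_1,\ldots,e_n)$ on a neighborhood $\Omega$ of $p$ in $M$, geodesic at $p$, and let $(E_1,\ldots,E_n)$ be the vector fields on $\Phi(\Omega\times(-\epsilon,0])$ obtained from the $e_i$ by parallel transport along the integral curves of $\xi/\|\xi\|$. The basic claim is that $\big(E_1,\ldots,E_n,\frac{\partial}{\partial t}\big)$ is an orthonormal frame of $C_{\epsilon}M$, where $\frac{\partial}{\partial t}$ is the coordinate field of $M\times[-\epsilon,0]$, whose image under $d\Phi$ is the unit field $\xi/\|\xi\|$; moreover, read in the coordinates of $M\times[-\epsilon,0]$, each $E_i$ equals $\frac{1}{\lambda}e_i$ in the $M$-directions and has vanishing $\frac{\partial}{\partial t}$-component. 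Indeed, the $E_i$ are parallel transports of unit vectors, hence orthonormal; they stay tangent to the slices $\Phi(M\times\{t\})$ and reduce to $\frac{1}{\lambda}e_i$ there because, for $v$ tangent to a leaf, $d\Psi_t(v)=\lambda(q,t)\,P_t(v)$, with $P_t$ parallel transport along the flow line — this is obtained by solving the linear ODE $\frac{D}{dt}\big(d\Psi_tv\big)=\frac{\psi_{\xi}}{\|\xi\|}\,d\Psi_tv$, which follows from $\overline\nabla_X\xi=\psi_{\xi}X$ together with the fact that $\|\xi\|$ is constant along each leaf (all of this is essentially in \cite{Caminha:2011}). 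Equivalently, the induced metric of $C_{\epsilon}M$ is $\lambda^2 g_M+dt^2$.

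With this frame,
$$\Delta F=\sum_{i=1}^n\big(E_i(E_iF)-(\nabla_{E_i}E_i)F\big)+\frac{\partial^2F}{\partial t^2}-\big(\nabla_{\partial/\partial t}(\partial/\partial t)\big)F.$$
The generatrices $t\mapsto\Phi(p,t)$ are integral curves of $\xi/\|\xi\|$, and a short computation from $\overline\nabla_X\xi=\psi_{\xi}X$ gives $\overline\nabla_{\xi/\|\xi\|}(\xi/\|\xi\|)=0$, so they are geodesics of $\overline M$ and hence of $C_{\epsilon}M$; thus $\nabla_{\partial/\partial t}(\partial/\partial t)=0$ and that term disappears, while $\partial/\partial t$ applied to $F$ gives the $\frac{\partial^2F}{\partial t^2}$ contribution. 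By Lemma \ref{lema conexao}, $\nabla_{E_i}E_i=-\frac{\psi_{\xi}}{\|\xi\|^2}\xi=-\frac{\psi_{\xi}}{\|\xi\|}\frac{\partial}{\partial t}$ at $(p,t)$, and differentiating (\ref{eq:funcao lambda}) gives $\frac{\psi_{\xi}}{\|\xi\|}\circ\Phi=\lambda'/\lambda$; summing over $i$ yields the term $n\frac{\lambda'(p,t)}{\lambda(p,t)}\frac{\partial F}{\partial t}$.

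It remains to compute $\sum_iE_i(E_iF)$ at $(p,t)$. Since $E_i=\frac{1}{\lambda}e_i$ in the $M$-directions, $E_iF=\frac{1}{\lambda}\,(e_iF_t)$, and applying $E_i$ once more with the product rule gives, at $(p,t)$, $E_i(E_iF)=\frac{1}{\lambda^2}\,e_i(e_iF_t)-\frac{1}{\lambda^3}\,e_i(\lambda_t)\,e_i(F_t)$. Summing over $i$ and using that $(e_i)$ is orthonormal and geodesic at $p$ in $M$ — so that $\sum_ie_i(e_iF_t)(p)=\Delta F_t(p)$ and $\sum_ie_i(\lambda_t)(p)e_i(F_t)(p)=\langle{\rm grad}(F_t),{\rm grad}(\lambda_t)\rangle_p$ — one gets $\sum_iE_i(E_iF)(p,t)=\frac{1}{\lambda^2}\big(\Delta F_t(p)-\frac{1}{\lambda}\langle{\rm grad}(F_t),{\rm grad}(\lambda_t)\rangle_p\big)$. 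Adding the three contributions gives the stated identity. The only real obstacle is the first step: verifying that $\big(E_i,\frac{\partial}{\partial t}\big)$ is a genuine orthonormal frame of $C_{\epsilon}M$ and that $E_i$ acts as $\frac{1}{\lambda}e_i$ in the slice directions; once the interplay between the flow of $\xi/\|\xi\|$ and parallel transport is sorted out, the rest is a direct differentiation.
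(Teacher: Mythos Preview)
Your proof is correct and follows essentially the same route as the paper's: the same adapted orthonormal frame $\big(E_1,\ldots,E_n,\xi/\|\xi\|\big)$, the same use of Lemma \ref{lema conexao} for the $\nabla_{E_i}E_i$ term, the same verification that $\nabla_{\xi/\|\xi\|}(\xi/\|\xi\|)=0$, and the same identification $E_i=\frac{1}{\lambda}e_i$ in the slice directions. The only organisational difference is that the paper derives the relation $d\Phi(e_i)=\lambda E_i$ via an explicit parametrized surface $f(s,t)=\Psi(t,\varphi(\alpha(s)))$ and the resulting Cauchy problem for $\langle\partial f/\partial s,E_j\rangle$, whereas you state the equivalent fact $d\Psi_t(v)=\lambda\,P_t(v)$ up front and justify it by the same ODE; this is the same computation, just packaged differently.
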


\begin{proof}
Fix a point $p\in M$ and, in a neighborhood $\Omega\subset M$ of $p$, an orthonormal frame $(e_1,\ldots,e_n,\eta_1,\ldots,\eta_k)$, adapted 
to $\varphi$, such that $(e_1,\ldots,e_n)$ is geodesic at $p$. As in the proof of Proposition \ref{segunda forma}, parallel transport this 
frame along the integral curves of $\xi/\|\xi\|$ to get vector fields $E_1,\ldots,E_n,N_1,\ldots,N_k$ along 
$\Phi(\Omega\times(-\epsilon,0])$. Then, $(E_1,\ldots,E_n,\frac{\xi}{\|\xi\|},N_1,\ldots,N_k)$ is an orthonormal frame adapted to the immersion $\Phi$.

The Laplacian of $F$ is given by
\begin{equation}\label{laplaciano}
\begin{split}
\Delta F&\,=\sum_{i=1}^nE_i(E_i(F))+\frac{\xi}{\|\xi\|}\left(\frac{\xi}{\|\xi\|}(F)\right)\\
&\,-\sum_{i=1}^n(\nabla_{E_i}E_i)(F)-(\nabla_{\xi/\|\xi\|}\xi/\|\xi\|)(F).
\end{split}
\end{equation}

It follows from Lemma \ref{lema conexao} that
\begin{eqnarray}\label{nablaeiei}
(\nabla_{E_i}E_i)(F)=\left(-\frac{\psi_{\xi}}{\|\xi\|^2}\xi\right)(F)=-\frac{\psi_{\xi}}{\|\xi\|}\frac{\partial F}{\partial t}.
\end{eqnarray}

Now, let us compute the summands $E_i(E_i(F))(q,t)$, where $q\in\Omega$ and $t\in [-\epsilon,0]$. To this end, take a smooth curve 
$\alpha:(-\delta,\delta)\rightarrow M$, such that $\alpha(0)=q$ and $\alpha'(0)=e_{i}(q)$. Then, consider the parametrized surface 
$f:(-\delta,\delta)\times[-\epsilon,0]\rightarrow\overline M$, such that
$$f(s,t)=\Psi(t,\varphi(\alpha(s))),$$
for $(s,t)\in(-\delta,\delta)\times[-\epsilon,0]$. (Note that the image of $f$ is contained in $C_{\epsilon}M$.) Lemma $3.4$ of 
\cite{Manfredo:1990} gives
$$\frac{D}{dt}\frac{\partial f}{\partial s}=\frac{D}{ds}\frac{\partial f}{\partial t}=\frac{D}{ds}\frac{\xi}{\|\xi\|}=\overline{\nabla}_{\frac{\partial f}{\partial s}}\frac{\xi}{\|\xi\|}=\frac{\psi_{\xi}}{\|\xi\|}\frac{\partial f}{\partial s}+\frac{\partial f}{\partial s}\left(\frac{1}{\|\xi\|}\right)\xi,$$
which, in turn, implies
$$\frac{d}{dt}\langle\frac{\partial f}{\partial s},E_j\rangle=\frac{\psi_{\xi}}{\|\xi\|}\langle\frac{\partial f}{\partial s},E_j\rangle.$$

Since $\langle\frac{\partial f}{\partial s},E_j\rangle_{(q,0)}=\langle E_i,E_j\rangle_{(q,0)}=\langle e_i(q),e_j(q)\rangle=\delta_{ij}$, 
in solving the Cauchy problem for $\langle\frac{\partial f}{\partial s},E_j\rangle$ so obtained, we get
$$\langle\frac{\partial f}{\partial s},E_i\rangle_{(q,t)}=\exp\left(\int_0^t\frac{\psi_{\xi}}{\|\xi\|}(q,u)du\right)=\lambda(q,t)$$
and, for $j\neq i$,
$$\langle \frac{\partial f}{\partial s},E_j\rangle_{(q,t)}=0.$$

Moreover, direct computation shows that 
$\frac{d}{dt}\langle\frac{\partial f}{\partial s},\xi\rangle=\frac{\psi_{\xi}}{\|\xi\|}\langle\frac{\partial f}{\partial s},\xi\rangle$; but, 
since $\langle\frac{\partial f}{\partial s},\xi\rangle_{(q,0)}=\langle E_i,\xi\rangle_{(q,0)}=0$, it follows from the uniqueness of the 
solution of a Cauchy problem that $\langle\frac{\partial f}{\partial s},\xi\rangle_{(q,t)}=0$, for $t\in [-\epsilon,0]$.

Since $\frac{\partial f}{\partial s}$ is tangent to the cone, the previous computations show that, at the point $(q,t)$,
\begin{equation}\label{eq:expressao para Ei}
\frac{\partial f}{\partial s}=\sum_{j=1}^n\langle\frac{\partial f}{\partial s},E_j\rangle E_j+\langle\frac{\partial f}{\partial s},\frac{\xi}{\|\xi\|}\rangle\frac{\xi}{\|\xi\|}=\langle\frac{\partial f}{\partial s},E_i\rangle E_i=\lambda E_i.
\end{equation}
Therefore,
$$E_i(F)(q,t)=\frac{1}{\lambda(q,t)}\frac{\partial f}{\partial s}(q,t)(F)=\frac{1}{\lambda(q,t)}dF_t(e_i(q))=\frac{1}{\lambda(q,t)}\langle {\rm grad}(F_t),e_i\rangle_q,$$
for all points $(q,t)\in\Omega\times[-\epsilon,0]$ and all $F\in C^{\infty}(C_{\epsilon}M)$. Thus,
$$E_i(E_i(F))(q,t)=\frac{1}{\lambda(q,t)}\langle {\rm grad}((E_i(F))_t),e_i\rangle_q.$$

On the other hand, at the point $q$ we get
\begin{equation}\nonumber
 \begin{split}
  {\rm grad}((E_i(F))_t)&\,=\frac{1}{\lambda_t}{\rm grad}\langle {\rm grad}(F_t),e_i\rangle+\langle {\rm grad}(F_t),e_i\rangle {\rm grad}\left(\frac{1}{\lambda_t}\right) \\
   &\,=\frac{1}{\lambda_t}{\rm grad}(e_i(F_t))-\langle {\rm grad}(F_t),e_i\rangle\frac{1}{\lambda_t^2}  {\rm grad}(\lambda_t)
 \end{split}
\end{equation}
and, hence,
\begin{equation}\nonumber
 \begin{split}
  E_i(E_i(F))(q,t)&\,=\frac{1}{\lambda(q,t)}\left(\frac{1}{\lambda(q,t)}e_i(e_i(F_t))(q)-\frac{\langle {\rm grad}(F_t),e_i\rangle_q}{\lambda^2(q,t)}e_i(q)(\lambda_t)\right)\\
   &\,=\frac{1}{\lambda^2(q,t)}\left(e_i(e_i(F_t))(q)-\frac{\langle {\rm grad}(F_t),e_i\rangle_q}{\lambda(q,t)}\langle {\rm grad}(\lambda_t),e_i\rangle_q\right),
 \end{split}
\end{equation}
at all points $(q,t)\in\Omega\times[-\epsilon,0]$. 

By using the fact that the frame $(e_1,\ldots,e_n)$ is geodesic at the point $p$ we get, at the point $(p,t)$,
\begin{equation}\label{somaeieif}
\sum_{i=1}^nE_i(E_i(F))=\frac{1}{\lambda^2}\left(\Delta F_t-\frac{1}{\lambda}\langle {\rm grad}(F_t),{\rm grad}(\lambda_t)\rangle\right).
\end{equation}

If we let $(\cdot)^{\top}$ denote orthogonal projection on $T(C_{\epsilon}M)$, we compute
\begin{equation}\nonumber
 \begin{split}
  \nabla_{\xi/\|\xi\|}\xi/\|\xi\|&\,=\frac{1}{\|\xi\|}\nabla_{\xi}\xi/\|\xi\|=\frac{1}{\|\xi\|}\left(\frac{1}{\|\xi\|}\nabla_{\xi}\xi+\xi\left(\frac{1}{\|\xi\|}\right)\xi\right)\\
  &\,=\frac{1}{\|\xi\|}\left(\frac{1}{\|\xi\|}(\widetilde{\nabla}_{\xi}\xi)^{\top}-\frac{\|\xi\|\psi_{\xi}}{\|\xi\|^{2}}\xi\right)\\
  &\,=\frac{1}{\|\xi\|}\left(\frac{1}{\|\xi\|}(\psi_{\xi}\xi)^{\top}-\frac{\psi_{\xi}}{\|\xi\|}\xi\right)=0;
 \end{split}
\end{equation}
hence, $(\nabla_{\xi/\|\xi\|}\xi/\|\xi\|)(F)=0$. 

Substituting this last computation in (\ref{laplaciano}), and taking (\ref{nablaeiei}) and (\ref{somaeieif}) into account, we finally arrive at
\begin{equation}\nonumber
 \begin{split}
  \Delta F(p,t)&\,=\frac{1}{\lambda^2(p,t)}\left(\Delta F_t(p)-\frac{1}{\lambda(p,t)}\langle {\rm grad}(F_t),{\rm grad}(\lambda_t)\rangle_p\right)\\
   &\,\,\,\,\,\,\,+n\frac{\psi_{\xi}}{\|\xi\|}\frac{\partial F}{\partial t}(p,t)+\frac{\partial^2F}{\partial t^2}(p,t),
 \end{split}
\end{equation}
and a simple computation shows that $\frac{\psi_{\xi}}{\|\xi\|}=\frac{\lambda'}{\lambda}$.
\end{proof}

\section{On the unstability of minimal cones}\label{instabilidade secao}

By Corollary \ref{cone minimo}, we know that $M^n$ is minimal in $\Xi^{n+k}$ if, and only if, $C_{\epsilon}M$ is minimal in 
$\overline M_c^{n+k+1}$. Since minimal immersions are precisely the critical points of the area functional with respect to variations that 
fix the boundary, for a given $M$, minimal in $\Xi^{n+k}$, it makes sense to consider the problem of stability of $C_{\epsilon}M$ 
with respect to normal variations that fix its boundary. In this section, we address this problem in the case in which $k=1$, i.e., when 
$M^n$ is a hypersurface of $\Xi^{n+1}$. This will extend the analysis made in \cite{Simons:1968}, where $\overline M=R^{n+2}$, 
$\Xi=S^{n+1}$ and $\xi(x)=x$.

Throughout the rest of this paper, until further notice, we stick to the notations of the previous section. In particular, $\overline M$ 
continues to be of constant sectional curvature, equal to $c$; also, whenever we let $\eta$ denote a unit vector field normal to $M$ in 
$\Xi$, we shall let $N$ denote the unit vector field normal to $C_{\epsilon}M$ in $\overline M$, obtained by parallel transport of $\eta$ 
along the integral curves of $\frac{\xi}{|\xi|}$ that intersect $M$. We start with the following auxiliary result. 

\begin{lemma}\label{elemento de volume}
Let $\Xi^{n+1}$ be oriented by the unit normal vector field $\frac{-\xi}{\|\xi\|}$, and let $M^n$ be a minimal hypersurface of 
$\Xi^{n+1}$, oriented by the unit vector field $\eta\in\mathfrak X(M)^{\bot}\cap\,\mathfrak X(\Xi)$. If $C_{\epsilon}M$ is oriented 
by $N$, then its volume element is given by $\lambda^ndM\wedge dt$, where $dM$ stands for the volume element of $M$.
\end{lemma}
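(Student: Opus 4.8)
The plan is to exhibit an explicit orthonormal frame for the ambient metric along $C_{\epsilon}M$ and read off the induced volume element in coordinates adapted to $M\times[-\epsilon,0]$. Fix $p\in M$ and a positively oriented orthonormal frame $(e_1,\ldots,e_n)$ on a neighborhood $\Omega\subset M$ of $p$, and let $E_1,\ldots,E_n$ be the vector fields on $\Phi(\Omega\times(-\epsilon,0])$ obtained by parallel transport of $e_1,\ldots,e_n$ along the integral curves of $\xi/\|\xi\|$, exactly as in the proof of Proposition~\ref{laplaciano F}. Viewing each $e_i$ as a ($t$-independent) vector field on $M\times[-\epsilon,0]$, equation~(\ref{eq:expressao para Ei}) shows that $d\Phi(e_i)=\lambda E_i$ at every point $(q,t)$, while $d\Phi(\partial_t)=\xi/\|\xi\|$. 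Since $(E_1,\ldots,E_n,\xi/\|\xi\|)$ is orthonormal in $\overline M$, the matrix of the induced metric with respect to the frame $(e_1,\ldots,e_n,\partial_t)$ of $M\times[-\epsilon,0]$ is $\mathrm{diag}(\lambda^2,\ldots,\lambda^2,1)$. Hence the Riemannian volume density of $C_{\epsilon}M$ equals $\sqrt{\lambda^{2n}\cdot 1}\;\omega^1\wedge\cdots\wedge\omega^n\wedge dt=\lambda^n\,dM\wedge dt$, where $(\omega^i)$ is the coframe dual to $(e_i)$ and $\omega^1\wedge\cdots\wedge\omega^n=dM$.

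It then remains to check the sign, i.e., that $(d\Phi(e_1),\ldots,d\Phi(e_n),d\Phi(\partial_t))$ is a positively oriented frame of $C_{\epsilon}M$ once the latter is oriented by $N$; equivalently, since $\lambda>0$, that $(E_1,\ldots,E_n,\xi/\|\xi\|,N)$ is positively oriented in $\overline M$. At $t=0$ one has $E_i=e_i$ and $N=\eta$, so this frame becomes $(e_1,\ldots,e_n,\xi/\|\xi\|,\eta)$. Comparing with the positively oriented frame $(e_1,\ldots,e_n,\eta,-\xi/\|\xi\|)$ of $\overline M$ --- produced by the chosen orientations of $M$ (by $\eta$) and of $\Xi$ (by $-\xi/\|\xi\|$) --- a single transposition together with a single sign change relates the two, which is an even operation; thus the frame at $t=0$ is positively oriented. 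Since $(E_1,\ldots,E_n,\xi/\|\xi\|,N)$ is an orthonormal frame for every $t$, the function $t\mapsto\overline{\mathrm{vol}}(E_1,\ldots,E_n,\xi/\|\xi\|,N)\in\{\pm 1\}$ is continuous, hence constant on $[-\epsilon,0]$, and the frame stays positively oriented. This upgrades the density of the first paragraph to the oriented volume element $\lambda^n\,dM\wedge dt$.

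The computational core is immediate from the Cauchy-problem analysis already carried out for Propositions~\ref{segunda forma} and~\ref{laplaciano F}; the only delicate point is the orientation bookkeeping in the second paragraph --- making sure that exchanging $\xi/\|\xi\|$ with $N$ and $\eta$ with $-\xi/\|\xi\|$ amounts to an \emph{even} number of elementary transpositions and sign changes, so that no spurious minus sign appears. (If one only needs the area of $C_{\epsilon}M$, the density statement of the first paragraph already suffices and the orientation discussion can be omitted.)
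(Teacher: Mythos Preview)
Your argument is correct and follows essentially the same route as the paper: both use the identity $d\Phi(e_i)=\lambda E_i$ from equation~(\ref{eq:expressao para Ei}) together with $d\Phi(\partial_t)=\xi/\|\xi\|$, and both carry out the identical orientation check comparing $(e_1,\ldots,e_n,\xi/\|\xi\|,\eta)$ with $(e_1,\ldots,e_n,\eta,-\xi/\|\xi\|)$. The only cosmetic difference is that you read off the volume via the Gram matrix $\mathrm{diag}(\lambda^2,\ldots,\lambda^2,1)$, whereas the paper evaluates the candidate form $\lambda^n\,dM\wedge dt$ directly on the orthonormal frame $(E_1,\ldots,E_n,\xi/\|\xi\|)$ to get~$1$; these are the same computation.
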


\begin{proof}
Let $(e_1,\ldots,e_n)$ be a positive orthonormal frame, defined in an open set $\Omega\subset M$. If $(\theta_1,\ldots,\theta_n)$ denotes the
corresponding coframe, then $dM=\theta_1\wedge\ldots\wedge\theta_n$ in $\Omega$.

Let $E_1,\ldots,E_n$ be the vector fields on $\Phi(\Omega\times(-\epsilon,\epsilon))$ obtained from the $e_i$'s
by parallel transport along the integral curves of $\xi/\|\xi\|$ that intersect $\Omega$. For $p\in\Omega$, the orthonormal basis
$(e_1,\ldots,e_n,\eta)$ of $T_p\Xi$ is positively oriented; hence, the orthonormal basis $(e_1,\ldots,e_n,\eta,-\frac{\xi}{\|\xi\|})$ 
of $T_p\overline M$ is also positively oriented. It follows that the orthonormal basis $(E_1,\ldots,E_n,N,-\frac{\xi}{\|\xi\|})_{(p,t)}$ 
of $T_{(p,t)}\overline M$ is positively oriented and, thus, $(E_1,\ldots,E_n,\frac{\xi}{\|\xi\|},N)_{(p,t)}$ is also a positively oriented 
orthonormal basis of $T_{(p,t)}\overline M$, for all $(p,t)\in\Omega\times(-\epsilon,\epsilon)$. Therefore, 
$(E_1,\ldots,E_n,\frac{\xi}{\|\xi\|})$ is a positively oriented orthonormal basis of $T_{(p,t)}(C_{\epsilon}M)$.

Now, let $\alpha_i:(-\delta,\delta)\rightarrow M$ be a smooth curve such that $\alpha_i(0)=p$ and $\alpha_i'(0)=e_i(p)$; if
$f_i:(-\delta,\delta)\times(-\epsilon,0]\rightarrow\overline M$ is the parametrized surface such that
$f_i(s,t)=\Psi(t,\varphi(\alpha_i(s)))$, we shaw in (\ref{eq:expressao para Ei}) that
$$E_i(p,t)=\frac{1}{\lambda(p,t)}\frac{\partial f_i}{\partial s}(0,t).$$
By the canonical identification of $T_{(p,t)}(M\times(-\epsilon,0])$ and $T_pM\oplus\mathbb R$, we have 
$$\Phi_*(e_{i}(p)\oplus0)_{(p,t)}=\frac{d}{ds}\big|_{s=0}\Phi(\alpha_i(s),t)=\frac{d}{ds}\big|_{s=0}f_i(s,t)=\frac{\partial f_i}{\partial s}(0,t)$$ 
and, thus,
$$\Phi_*\left(\frac{e_i(p)}{\lambda(p,t)}\oplus 0\right)_{(p,t)}=E_i(p,t).$$
Therefore, by using the canonical identification of $T_{\Phi(p,t)}(C_{\epsilon}M)$ and $\Phi_*(T_{(p,t)}(M\times(-\epsilon,0]))$, we get 
$$\lambda^n(dM\wedge dt)(E_1,\ldots,E_n,\frac{\xi}{\|\xi\|})=\lambda^n(dM\wedge dt)(\frac{e_1}{\lambda}\oplus 0,\ldots,\frac{e_n}{\lambda}\oplus 0,0\oplus\partial t)=1,$$
which concludes the proof.
\end{proof}

Given a minimal isometric immersion $\varphi:M^n\rightarrow\Xi^{n+1}$, the following proposition computes the second variation of area for 
the corresponding $\epsilon-$truncated cone $C_{\epsilon}M$. As usual, for $F\in C^{\infty}(C_{\epsilon}M)$, we let $I(F)$ 
denote the index form of $C_{\epsilon}M$ in the direction of $V=FN$.

\begin{proposition}\label{indice}
Let $M^n$ be a closed, oriented, minimal hypersurface of $\Xi^{n+1}$. Suppose that the function $\lambda(p,t)$ does not depend on the point 
$p$, and let $N(p,t)$ denote the unit normal vector field that orients $C_{\epsilon}M$. If $F\in C^{\infty}(C_{\epsilon}M)$ is such that 
$F(p,-\epsilon)=F(p,0)=0$, for each $p\in M$, then
\begin{equation}\nonumber
 \begin{split}
  I(F)&\,=\int_{M\times [-\epsilon ,0]}F\lambda^{n-2}\Big(-\Delta F_t-n\lambda\lambda'\frac{\partial F}{\partial t}-\lambda^2\frac{\partial^2F}{\partial t^2}\\
        &\hspace{3cm}-c(n+1)\lambda^2F-\|A^{\eta}\|^2F\Big)dM\wedge dt.
 \end{split}
\end{equation}
\end{proposition}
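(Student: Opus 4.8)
The plan is to reduce the statement to the general second variation formula for the area of a minimal hypersurface and then feed in the three structural facts already established. Since $M^n$ is minimal in $\Xi^{n+1}$, Corollary \ref{cone minimo} ensures that $C_{\epsilon}M$ is minimal in $\overline M_c^{n+2}$, hence a critical point of the area, so that the index form $I(F)$ is well defined; moreover, $\partial(C_{\epsilon}M)=M\cup M_{\epsilon}$. For a normal variation $V=FN$ with $F$ vanishing on $\partial(C_{\epsilon}M)$, the classical formula (see, e.g., \cite{Simons:1968}) reads
\[
I(F)=\int_{C_{\epsilon}M}\Big(\|\nabla F\|^2-\big(\|A^N\|^2+\overline{\mathrm{Ric}}(N,N)\big)F^2\Big)\,dV,
\]
where $\nabla$ and $dV$ denote, respectively, the gradient and the volume element of $C_{\epsilon}M$. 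Because $\overline M$ has constant sectional curvature $c$ and dimension $n+2$, we have $\overline{\mathrm{Ric}}(N,N)=(n+1)c$ at every point.

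Next I would integrate by parts. As $C_{\epsilon}M$ is compact with boundary and $F(\cdot,-\epsilon)=F(\cdot,0)=0$, Green's identity gives $\int_{C_{\epsilon}M}\|\nabla F\|^2\,dV=-\int_{C_{\epsilon}M}F\,\Delta F\,dV$, whence
\[
I(F)=-\int_{C_{\epsilon}M}F\Big(\Delta F+\big(\|A^N\|^2+(n+1)c\big)F\Big)\,dV.
\]
At this point I would substitute the three ingredients. Proposition \ref{laplaciano F}, together with the hypothesis that $\lambda$ does not depend on $p$ — so that ${\rm grad}(\lambda_t)=0$ and the mixed gradient term vanishes — gives $\Delta F=\frac{1}{\lambda^2}\Delta F_t+n\frac{\lambda'}{\lambda}\frac{\partial F}{\partial t}+\frac{\partial^2F}{\partial t^2}$. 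Proposition \ref{segunda forma} gives $\|A^N_{(p,t)}\|^2=\frac{1}{\lambda^2}\|A^{\eta}_p\|^2$. Lemma \ref{elemento de volume} gives $dV=\lambda^n\,dM\wedge dt$. Inserting these, pulling a factor $\frac{1}{\lambda^2}$ out of the parenthesis and absorbing it into $\lambda^n$ to produce $\lambda^{n-2}$, the integrand becomes precisely
\[
F\lambda^{n-2}\Big(-\Delta F_t-n\lambda\lambda'\frac{\partial F}{\partial t}-\lambda^2\frac{\partial^2F}{\partial t^2}-c(n+1)\lambda^2F-\|A^{\eta}\|^2F\Big)\,dM\wedge dt,
\]
which is the asserted expression.

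No hard analysis is needed; the only delicate points are bookkeeping. One must fix the sign and curvature conventions so that, in the classical index form, the Ricci contribution appears as $+(n+1)c$ and the shape-operator contribution as $-\|A^N\|^2$, consistently with the Laplacian convention $\Delta=\mathrm{div}\,\mathrm{grad}$ used in Proposition \ref{laplaciano F}; and one must observe that the hypothesis ``$\lambda$ does not depend on $p$'' is exactly what kills the term $\langle{\rm grad}(F_t),{\rm grad}(\lambda_t)\rangle$ arising in Proposition \ref{laplaciano F}. This hypothesis holds automatically in the warped product setting, where (\ref{eq:funcao lambda}) gives $\lambda(p,t)=f(t)$. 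Granted these, the identity follows by mechanically assembling the classical second variation formula, the integration by parts, Propositions \ref{segunda forma} and \ref{laplaciano F}, and Lemma \ref{elemento de volume}.
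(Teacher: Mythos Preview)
Your proof is correct and follows essentially the same approach as the paper: start from the classical second variation formula, use $\overline{\mathrm{Ric}}(N,N)=(n+1)c$, and substitute Propositions \ref{segunda forma} and \ref{laplaciano F} together with Lemma \ref{elemento de volume}. The only cosmetic difference is that the paper cites the index form directly in its Laplacian version $I(F)=\int(-F\Delta F-(\overline R+\|A^N\|^2)F^2)\,d(C_{\epsilon}M)$, whereas you begin with the gradient version and reach this by an integration by parts.
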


\begin{proof}
It is a classical fact (cf. \cite{Barbosa:88}, \cite{Simons:1968} or \cite{Xin:03}) that
$$I(F)=\int_{C_{\epsilon}M}\left(-F\Delta F-(\overline R+\|A^N\|^2)F^2\right)d(C_{\epsilon}M),$$
where $\overline R=\overline{\text{Ric}}(N,N)$, and $\overline{\text{Ric}}$ denotes the Ricci tensor of $\overline M$.
Therefore, it follows from the formulae of propositions \ref{segunda forma} and \ref{laplaciano F}, together with the fact that $\overline M$
has sectional curvature constant and equal to $c$ and $\lambda(p,t)$ does not depend on $p$, that the integrand of the right hand side equals
\begin{equation}\nonumber
 \begin{split}
   &-F\left(\frac{1}{\lambda^2}\Delta F_t+n\frac{\lambda'}{\lambda}\frac{\partial F}{\partial t}+\frac{\partial^2F}{\partial t^2}\right)-c(n+1)F^2-\|A^{\eta}\|^2\frac{F^2}{\lambda^2}=\\
   &\,=\frac{F}{\lambda^2}\left(-\Delta F_t-n\lambda\lambda'\frac{\partial F}{\partial t}-\lambda^2\frac{\partial^2F}{\partial t^2}-c(n+1)\lambda^2F-\|A^{\eta}\|^2F\right).
 \end{split}
\end{equation}
Finally, it now suffices to apply the result of the previous lemma and integrate on $M\times [-\epsilon,0]$.
\end{proof}

Now, let $C_0^{\infty}[-\epsilon,0]=\{g\in C^{\infty}[-\epsilon,0];\,g(-\epsilon)=g(0)=0\}$. Following \cite{Simons:1968}, the previous 
proposition motivates the introduction of the linear differential operators $\mathcal L_1:C^{\infty}(M)\rightarrow C^{\infty}(M)$ and
$\mathcal L_2:C_0^{\infty}[-\epsilon,0]\rightarrow C^{\infty}[-\epsilon,0]$, given by
$$\mathcal L_1(f)=-\Delta f-\left\|A^{\eta}\right\|^2f\ \ \text{and}\ \ \mathcal L_2(g)=-\lambda^2g''-n\lambda\lambda'g'-c(n+1)\lambda^2g.$$

Standard elliptic theory (cf. \cite{Gilbarg:98}) shows that $\mathcal L_1$ can be diagonalized by a sequence
$(f_i)_{i\geq 1}$ of smooth eigenfunctions, orthogonal in $L^2(M)$ and whose sequence $(\lambda_i)_{i\geq 1}$ of corresponding eigenvalues 
satisfy $\lambda_1\leq\lambda_2\leq\cdots\rightarrow+\infty$; moreover, each $f\in C^{\infty}(M)$ can be uniquely written as
$f=\sum_{i\geq 1}a_if_i$, for some $a_i\in\mathbb R$.

On the other hand, equation $\mathcal L_2(g)=\delta g$, for $\delta\in\mathbb R$, is equivalent to 
$$-\lambda^2g''-n\lambda\lambda 'g'-c(n+1)\lambda^2g-\delta g=0,$$
or (after multiplying both sides by $-\lambda^{n-2}$) yet to
\begin{equation}\label{eq:Sturm-Liouville regular problem under consideration}
(\lambda^ng')'+c(n+1)\lambda^ng+\delta\lambda^{n-2}g=0.
\end{equation}
Hence, the elementary theory of regular Sturm-Liouville problems (cf. \cite{Courant:1989}) shows that $\mathcal L_2$ can also be 
diagonalized by a sequence $(g_i)_{i\geq 1}$ of smooth eigenfunctions, orthogonal in $L^2_w[-\epsilon,0]$ with respect to the weight
$w=\lambda^{n-2}$ and whose sequence $(\delta_i)_{i\geq 1}$ of  corresponding eigenvalues satisfy 
$\delta_1\leq\delta_2\leq\cdots\rightarrow+\infty$; moreover, each $g\in C_0^{\infty}[-\epsilon,0]$ can be uniquely written as
$g=\sum_{i\geq 1}a_ig_i$, for some $a_i\in\mathbb R$.

In view of all of the above, the proof of the following result parallels that of Lemma $6.1.6$ of \cite{Simons:1968}. For the sake of
completeness, we present it here.

\begin{theorem}\label{instabilidade}
With notations as in Proposition \ref{indice}, it is possible to choose $F$ such that $I(F)<0$ if, and only if, $\lambda_1+\delta_1<0$, 
where $\lambda_1$ and $\delta_1$ stand, respectively, to the first eigenvalues of $\mathcal L_1$ and $\mathcal L_2$.
\end{theorem}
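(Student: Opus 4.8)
The plan is to use Proposition \ref{indice} to reduce the sign of $I(F)$ to a pair of one–variable variational problems governed respectively by $\mathcal L_1$ and $\mathcal L_2$; this is exactly the scheme of Lemma $6.1.6$ of \cite{Simons:1968}, the point being that, since by hypothesis $\lambda$ depends only on $t$, the integrand of Proposition \ref{indice} separates cleanly. First I would observe that any admissible $F$ --- i.e.\ $F\in C^{\infty}(C_\epsilon M)$ with $F(\cdot,-\epsilon)=F(\cdot,0)=0$ --- can be written as $F(p,t)=\sum_{i,j\ge1}a_{ij}f_i(p)g_j(t)$, where $(f_i)$ is an $L^2(M)$--orthonormal basis of eigenfunctions of $\mathcal L_1$ with eigenvalues $\lambda_i$ and $(g_j)$ is an $L^2_w[-\epsilon,0]$--orthonormal basis of eigenfunctions of $\mathcal L_2$ (weight $w=\lambda^{n-2}$) with eigenvalues $\delta_j$: indeed, for each fixed $t$ one has $F_t=\sum_i b_i(t)f_i$ with $b_i\in C^{\infty}[-\epsilon,0]$ and $b_i(-\epsilon)=b_i(0)=0$, so each $b_i\in C_0^{\infty}[-\epsilon,0]$ and expands as $b_i=\sum_j a_{ij}g_j$.

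Next I would substitute this expansion. Because $\mathcal L_1 f_i=\lambda_i f_i$, $\mathcal L_2 g_j=\delta_j g_j$, and $\lambda$ is a function of $t$ alone, the parenthesized operator in Proposition \ref{indice} sends $f_i g_j$ to $(\lambda_i+\delta_j)f_ig_j$. Using the orthonormality of the $f_i$ in $L^2(M)$ and of the $g_j$ in $L^2_w[-\epsilon,0]$ --- and noting that the weight $w=\lambda^{n-2}$ of the Sturm--Liouville problem (\ref{eq:Sturm-Liouville regular problem under consideration}) is exactly the factor multiplying $dM\wedge dt$ in the integrand --- one gets
\[
I(F)=\sum_{i,j\ge1}a_{ij}^2(\lambda_i+\delta_j),\qquad \int_{M\times[-\epsilon,0]}\lambda^{n-2}F^2\,dM\wedge dt=\sum_{i,j\ge1}a_{ij}^2 .
\]
To make the term–by–term integration legitimate I would either invoke the rapid decay of the $a_{ij}$ (standard elliptic estimates, $F$ being smooth), or --- more robustly --- bypass expansions altogether by integrating by parts directly in the formula of Proposition \ref{indice}: using that $\lambda$ depends only on $t$, the $t$--terms satisfy $-\lambda^n\partial_t^2F-n\lambda^{n-1}\lambda'\partial_tF=-(\lambda^n\partial_tF)'$, so after integrating by parts in $t$ (the boundary terms vanish) and using $\int_M F_t(-\Delta F_t)\,dM=\int_M|\mathrm{grad}\,F_t|^2\,dM$ one obtains
\[
I(F)=\int_{-\epsilon}^0\lambda^{n-2}Q_1(F_t)\,dt+\int_M Q_2\big(F(p,\cdot)\big)\,dM,
\]
where $Q_1(f)=\int_M(|\mathrm{grad}\,f|^2-\|A^{\eta}\|^2f^2)\,dM$ and $Q_2(h)=\int_{-\epsilon}^0\big(\lambda^n(h')^2-c(n+1)\lambda^nh^2\big)\,dt$ are the Dirichlet forms of $\mathcal L_1$ and of (\ref{eq:Sturm-Liouville regular problem under consideration}); the Rayleigh characterizations $Q_1(f)\ge\lambda_1\|f\|_{L^2(M)}^2$ and $Q_2(h)\ge\delta_1\|h\|_{L^2_w}^2$ then give $I(F)\ge(\lambda_1+\delta_1)\int_{M\times[-\epsilon,0]}\lambda^{n-2}F^2\,dM\wedge dt$.

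Finally I would read off both implications. If $\lambda_1+\delta_1\ge0$, then every $\lambda_i+\delta_j\ge\lambda_1+\delta_1\ge0$, whence $I(F)=\sum a_{ij}^2(\lambda_i+\delta_j)\ge0$ (equivalently, the last displayed inequality gives $I(F)\ge0$) for every admissible $F$, so no choice of $F$ makes $I(F)<0$. Conversely, if $\lambda_1+\delta_1<0$, I would take $F(p,t)=f_1(p)g_1(t)$: this $F$ is smooth and, since $g_1\in C_0^{\infty}[-\epsilon,0]$, it vanishes on $\partial(C_\epsilon M)=M\cup M_\epsilon$, while the same computation restricted to a single term gives $I(F)=(\lambda_1+\delta_1)\int_{M\times[-\epsilon,0]}\lambda^{n-2}f_1^2g_1^2\,dM\wedge dt<0$. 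The one mildly delicate point is the $t$--integration by parts that recasts the operator in the self–adjoint (divergence) form matching the weighted problem (\ref{eq:Sturm-Liouville regular problem under consideration}) with the correct weight $\lambda^{n-2}$, together with --- if one follows the eigenfunction–expansion route instead --- the justification of the term–by–term integration; everything else is bookkeeping.
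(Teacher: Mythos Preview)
Your proposal is correct, and your primary argument --- the double eigenfunction expansion $F=\sum_{i,j}a_{ij}f_ig_j$ leading to $I(F)=\sum a_{ij}^2(\lambda_i+\delta_j)$ --- is exactly the paper's proof, which in turn follows Simons' Lemma~6.1.6. The paper does not normalize the eigenfunctions, so its final formula carries the factors $\bigl(\int_M f_i^2\,dM\bigr)\bigl(\int_{-\epsilon}^0 g_j^2\lambda^{n-2}\,dt\bigr)$, but this is cosmetic.

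Your alternative route --- integrating by parts in $t$ to rewrite $-\lambda^n\partial_t^2F-n\lambda^{n-1}\lambda'\partial_tF=-(\lambda^n\partial_tF)'$, splitting $I(F)=\int_{-\epsilon}^0\lambda^{n-2}Q_1(F_t)\,dt+\int_M Q_2(F(p,\cdot))\,dM$, and then applying the Rayleigh inequalities $Q_1\ge\lambda_1\|\cdot\|_{L^2(M)}^2$ and $Q_2\ge\delta_1\|\cdot\|_{L^2_w}^2$ directly --- is a genuinely cleaner variant that the paper does not use. It buys you exactly what you say: it avoids the convergence and term-by-term integration issues inherent in the eigenfunction-expansion argument (which the paper, like Simons, passes over in silence), at the cost of being slightly less explicit about the diagonal structure. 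Both approaches need the same ingredient for the converse direction, namely testing with $F=f_1g_1$.
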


\begin{proof}
For a fixed $p\in M$, we have $F(p,\cdot )\in C_0^{\infty}[-\epsilon,0]$. Therefore, the discussion on the diagonalization of $\mathcal L_2$
gives $F(p,t)=\sum_{j\geq 1}a_j(p)g_j(t)$, for some $a_j\in C^{\infty}(M)$; hence, by invoking the discussion on the diagonalization of 
$\mathcal L_1$, we get
$$F(p,t)=\sum_{i,j\geq 1}a_{ij}f_i(p)g_j(t),$$
for some $a_{ij}\in\mathbb R$.

It now follows from the result of Proposition \ref{indice} that
\begin{equation}\nonumber
 \begin{split}
I(F)&\,=\int_{M\times[-\epsilon,0]}\lambda^{n-2}\sum_{i,j\geq 1}a_{ij}f_ig_j\sum_{k,l\geq 1}(a_{kl}\mathcal L_1(f_k)g_l+a_{kl}f_k\mathcal L_2(g_l))dM\wedge dt\\
    &\,=\int_{M\times[-\epsilon,0]}\lambda^{n-2}\sum_{i,j\geq 1}a_{ij}f_ig_j\sum_{k,l\geq 1}a_{kl}(\lambda_k+\delta_l)f_kg_ldM\wedge dt\\
    &\,=\sum_{i,j,k,l\geq 1}a_{ij}a_{kl}(\lambda_k+\delta_l)\int_{M\times[-\epsilon,0]}f_if_kg_jg_l\lambda^{n-2}dM\wedge dt.  
 \end{split}
\end{equation}

From here, the orthogonality conditions on the eigenfunctions of $\mathcal L_1$ and $\mathcal L_2$ easily give
$$I(F)=\sum_{i,j\geq 1}a_{ij}^{2}(\lambda_i+\delta_j)\left(\int_Mf_i^2dM\right)\left(\int_{-\epsilon}^0g_j^2\lambda^{n-2}dt\right).$$
Therefore, if $I(F)<0$, then some factor $\lambda_i+\delta_j$ is negative and, hence, $\lambda_1+\delta_1<0$ (since $\lambda_1\leq\lambda_i$ 
and $\delta_1\leq\delta_j$); conversely, if $\lambda_1+\delta_1<0$, choose $F(p,t)=f_1(p)g_1(t)$ to get $I(F)<0$. 
\end{proof}

For future reference, we recall the standard variational characterization of $\lambda_1$ (cf. \cite{Chavel:1984} or 
\cite{Gilbarg:98}): for a given $f\in C^{\infty}(M)\setminus\{0\}$, let the Rayleigh quotient of $f$ with respect to 
$\mathcal L_1$ be defined by
\begin{equation}\label{eq:Rayleigh quotient for L1}
RQ[f]=\frac{\int_M-f(\Delta f+\left\|A^{\eta}\right\|^2f)dM}{\int_Mf^2dM};
\end{equation}
Then,
\begin{equation}\label{eq:variational characterization of lambda 1}
\lambda_1=\min\{RQ[f];\,f\in C^{\infty}(M)\setminus\{0\}\},
\end{equation}
with equality if, and only if, $f$ is an eigenfunction of $\mathcal L_1$ with respect to $\lambda_1$.

In what concerns $\delta_1$, given $g\in C^{\infty}_0[-\epsilon,0]\setminus\{0\}$, let the Rayleigh quotient of $g$ with respect to 
(\ref{eq:Sturm-Liouville regular problem under consideration}) be defined by
\begin{equation}\label{eq:Rayleigh quotient}
RQ[g]=\frac{\int_{-\epsilon}^0\lambda^n((g')^2-c(n+1)g^2)dt}{\int_{-\epsilon}^0\lambda^{n-2}g^2dt}.
\end{equation}
Then (cf. \cite{Courant:1989}),
\begin{equation}\label{eq:variational characterization of delta 1}
\delta_1=\min\{RQ[g];\,g\in C^{\infty}_0[-\epsilon,0]\setminus\{0\}\},
\end{equation}
with equality if, and only if, $g$ is an eigenfunction of $\mathcal L_2$ with respect to $\delta_1$.

\section{Minimal cones in warped products}\label{aplicacao}

Let $B$ and $F$ be Riemannian manifolds and $f:B\rightarrow\mathbb R$ be a smooth positive function. The {\em warped product} 
$M=B\times_fF$ is the product manifold $B\times F$, furnished with the Riemannian metric
$$g=\pi_B^*(g_B)+(f\circ\pi_B)^2\pi_F^*(g_F),$$
where $\pi_B$ and $\pi_F$ denote the canonical projections from $B\times F$ onto $B$ and $F$ and $g_B$ and $g_F$ denote the Riemannian 
metrics of $B$ and $F$, respectively.

In this section, we shall consider a warped product $\overline M^{n+2}_c=I\times_fF^{n+1}$, with $I\subset\mathbb R$, $f(0)=1$ 
and having constant sectional curvature, equal to $c$. By Proposition $7.42$ of \cite{O'Neill:83}, this last condition amounts to the fact
that $F^{n+1}$ should have constant sectional curvature $k$, such that
$$\frac{f''}{f}=-c=\frac{(f')^{2}-k}{f^{2}}$$
on $I$.

In what concerns our previous discussion of cones, we get the following consequence of Proposition \ref{laplaciano F} when 
$\overline M=I\times_fF$, a warped product for which $I\subset\mathbb R$.

\begin{corollary}\label{cone warped}
Let $\overline M_c^{n+2}=I\times_fF^{n+1}$, with $f(0)=1$. If $M^n$ is a closed Riemannian manifold and $\varphi:M^n\rightarrow F^{n+1}$ 
is an isometric immersion, then
$$\Delta L(t,p)=\frac{1}{f^2(t)}\Delta L_t(p)+n\frac{f'(t)}{f(t)}\frac{\partial L}{\partial t}+\frac{\partial^2L}{\partial t^2},$$
for all $L\in C^{\infty}(I\times_fM^n)$.
\end{corollary}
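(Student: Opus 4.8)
The plan is to obtain Corollary \ref{cone warped} as a direct specialization of Proposition \ref{laplaciano F}, the only substantive step being the explicit evaluation of the auxiliary function $\lambda$ of (\ref{eq:funcao lambda}) in the warped product setting. As recalled in the introduction, for $\overline M = I\times_fF^{n+1}$ the field $\xi = (f\circ\pi_I)\partial_t$ is closed conformal with conformal factor $\psi_{\xi} = f'\circ\pi_I$; moreover $\|\xi\| = f\circ\pi_I$ since $\partial_t$ is unit, and the flow of $\xi/\|\xi\| = \partial_t$ is $\Psi(s,(x,q)) = (x+s,q)$. Identifying $\varphi:M^n\rightarrow F^{n+1}$ with $\widetilde\varphi(q) = (0,\varphi(q))\in\{0\}\times F$, this gives $\Psi(s,\widetilde\varphi(q)) = (s,\varphi(q))$, hence $\frac{\psi_{\xi}}{\|\xi\|}(\Psi(s,\widetilde\varphi(q))) = \frac{f'(s)}{f(s)}$, so that by (\ref{eq:funcao lambda}) and the normalization $f(0)=1$,
\[
\lambda(q,t) = \exp\left(\int_0^t\frac{f'(s)}{f(s)}\,ds\right) = \frac{f(t)}{f(0)} = f(t).
\]

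Next I would observe that $\lambda(q,t) = f(t)$ is independent of $q\in M$, so that ${\rm grad}(\lambda_t) = 0$ on $M$ and the cross term in the formula of Proposition \ref{laplaciano F} drops out. Substituting $\lambda = f(t)$ and $\lambda' = \frac{\partial\lambda}{\partial t} = f'(t)$ into that formula then yields
\[
\Delta L(t,p) = \frac{1}{f^2(t)}\Delta L_t(p) + n\frac{f'(t)}{f(t)}\frac{\partial L}{\partial t} + \frac{\partial^2L}{\partial t^2},
\]
which is exactly the asserted identity.

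The last point to settle is the passage from the truncated cone, for which Proposition \ref{laplaciano F} is stated, to the warped product $I\times_fM^n$. Here I would note that $\Phi(p,t) = (t,\varphi(p))$ realizes $M\times[-\epsilon,0]$, with the metric induced from $\overline M$, as the warped slab $[-\epsilon,0]\times_fM^n$: indeed $\Phi_*\partial_t = \partial_t$ is unit, $\Phi_*v = \varphi_*v$ has $\overline M$-norm $f(t)\|v\|_M$ for $v\in TM$, and these two are orthogonal, so the induced metric is $dt^2 + f(t)^2g_M$. Since the identity to be proved is pointwise and the computation carried out in the proof of Proposition \ref{laplaciano F} uses only this local warped structure, the formula holds throughout $I\times_fM^n$. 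There is no genuine obstacle in this corollary: once $\lambda = f(t)$ has been identified, everything reduces to substitution into Proposition \ref{laplaciano F}.
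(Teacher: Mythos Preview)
Your proposal is correct and follows essentially the same route as the paper: identify the closed conformal field $\xi=(f\circ\pi_I)\partial_t$ and its flow, compute $\lambda(q,t)=f(t)$ from (\ref{eq:funcao lambda}), observe that this is independent of $q$ so the gradient cross term vanishes, and substitute into Proposition \ref{laplaciano F}. The paper's proof is organized in exactly this way, with the same remark that the immersion $\Phi(p,t)=(t,\varphi(p))$ extends to all of $I$ and realizes the cone as the warped product $[-\epsilon,0]\times_fM^n$.
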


\begin{proof}
It is a standard fact (cf. \cite{O'Neill:83}) that, in $I\times_fF^{n+1}$, the vector field
$\xi=(f\circ\pi_I)\partial_t$ is closed and conformal, with conformal factor $\psi_{\xi}=f'\circ\pi_I$. Moreover, $\xi\neq 0$, 
since $f$ is positive. The flux $\Psi$ of $\frac{\xi}{\|\xi\|}=\partial_t$ is given by
$$\Psi(t,(t_0,p))=(t+t_0,p),$$
and it is clear that the submanifolds $\{t_0\}\times F^{n+1}$, with $t_0\in I$, are leaves of $\xi^{\perp}$.

Now, let $\varphi:M^n\rightarrow F^{n+1}$ be an isometric immersion from a closed Riemannian manifold $M^n$ into $F^{n+1}$. Since $f(0)=1$,
the leaf $\{0\}\times F^{n+1}$ of $\xi^{\perp}$ (with the metric induced from $I\times_fF^{n+1}$) is isometric to $F^{n+1}$; therefore, we 
can (and do) assume that $\varphi$ takes $M$ into $\{0\}\times F^{n+1}$. The compactness of $M$ guarantees the existence of $\epsilon>0$ 
such that the $\epsilon-$truncated cone $C_{\epsilon}M$ is given by the immersion
$$\Phi(p,t)=\Psi(t,(0,\varphi(p)))=(t,\varphi(p)),$$
for $t\in[-\epsilon,0]$ and $p\in M^n$. (Actually, $\Phi$ continues to be an immersion even if we change $t\in[-\epsilon ,0]$ by $t\in I$.)
Moreover, $C_{\epsilon}M$ is isometric to the warped product $[-\epsilon,0]\times_fM^n$.

In view of the above, the function $\lambda$ of (\ref{eq:funcao lambda}) is such that
$$\lambda(p,s)=\exp\left(\int_0^s\frac{\psi_{\xi}}{\|\xi\|}(\Psi(t,\varphi(p)))dt\right)=\exp\left(\int_0^s\frac{f'}{f}(t)dt\right)=f(s).$$
In particular, $\lambda_s:M^n\rightarrow\mathbb R$ is constant, for all $s\in[-\epsilon,0]$, and it suffices to apply the result of 
Proposition \ref{laplaciano F}.
\end{proof}

From now on, let $M^n$ be a closed, minimal and non totally geodesic hypersurface of $F^{n+1}\approx\{0\}\times F^{n+1}$. According to the 
proof of the previous corollary, we shall identify the $\epsilon-$truncated cone $C_{\epsilon}M$ with the warped product 
$[-\epsilon,0]\times_fM^n$, canonically immersed into $\overline M^{n+2}_c$.

If (as before) $N(t,p)$ stands for the unit normal vector field of $C_{\epsilon}M$ and $G\in C^{\infty}(C_{\epsilon}M)$ is such that 
$G(-\epsilon,p)=G(0,p)=0$ for each $p\in M$, then Proposition \ref{indice} gives
\begin{equation}\label{indice warped}
 \begin{split}
  I(G)&\,=\int_{M\times[-\epsilon,0]}Gf^{n-2}\Big(-\Delta G_t-nff'\frac{\partial G}{\partial t}-f^2\frac{\partial^2G}{\partial t^2}\\
 &\,\hspace{3cm}-c(n+1)f^2G-\|A\|^2G\Big)dM\wedge dt,
 \end{split}
\end{equation}
where $\|A\|$ stands for the norm of the second fundamental form of the immersion of $M^n$ into $F^{n+1}$ and $\Delta$ for the Laplacian
operator of $M^n$. 

In this case, the linear differential operators $\mathcal L_1:C^{\infty}(M)\rightarrow C^{\infty}(M)$ and 
$\mathcal L_2:C_0^{\infty}[-\epsilon,0]\rightarrow C^{\infty}[-\epsilon,0]$ are given by
\begin{equation}\label{operador l um}
\mathcal L_1(g)=-\Delta g-\|A\|^2g,
\end{equation}
for $g\in C^{\infty}(M)$, and
\begin{equation}\label{operador l dois}
\mathcal L_2(h)=-f^2h''-nff'h'-c(n+1)f^2h,
\end{equation} 
for $h\in C_0^{\infty}[-\epsilon,0]$. 

We want to apply Theorem \ref{instabilidade} to the case in which $\overline M_c^{n+2}$ is the Euclidean sphere $\mathbb S^{n+2}$. 
To this end, let $I=\left(-\frac{\pi}{2},\frac{\pi}{2}\right)$, $f(t)=\cos t$, $F^{n+1}=\mathbb S^{n+1}$,
$N=(0,\ldots,0,1)\in\mathbb S^{n+2}$ and consider $\mathbb S^{n+1}$ as the equator of $\mathbb S^{n+2}$ which has $N$ as North pole; also, 
identify $x=(x_1,\ldots,x_{n+2})\in\mathbb S^{n+1}$ to the point $x=(x_1,\ldots,x_{n+2},0)\in\mathbb S^{n+2}$. With these conventions, the 
map 
$$(t,x)\mapsto(\cos t)x+(\sin t)N$$ 
defines an isometry between $\left(-\frac{\pi}{2},\frac{\pi}{2}\right)\times_{\cos t}\mathbb S^{n+1}$ and $\mathbb S^{n+2}\setminus\{\pm N\}$. 

Once again, let $\varphi:M^n\rightarrow\mathbb S^{n+1}$ be a closed, minimal, non totally geodesic hypersurface of $\mathbb S^{n+1}$. The 
$\epsilon-$truncated cone $C_{\epsilon}M$ can be seen as the image of the isometric immersion
\begin{equation}\label{cone esfera}
\begin{array}{rrcl}
\Phi:&[-\epsilon,0]\times M^n&\longrightarrow&\mathbb S^{n+2}\\
     &(t,x)&\longmapsto&(\cos t)x+(\sin t)N.
\end{array}
\end{equation}

In order to get an upper estimate for $\lambda_1$, recall from (\ref{eq:Rayleigh quotient for L1}) and 
(\ref{eq:variational characterization of lambda 1}) that 
\begin{equation}\label{eq:estimating lambda 1 again}
\lambda_1\leq\frac{\int_M-g(\Delta g+\|A\|^2g)dM}{\int_Mg^2dM},
\end{equation}
for any $g\in C^{\infty}(M)\setminus\{0\}$. Following \cite{Simons:1968}, let $\tau>0$ and $g_{\tau}=(\|A\|^2+\tau)^{1/2}$. Simons' formula for 
$\Delta(\|A\|^2)$ (cf. \cite{Caminha:2006} or \cite{Simons:1968} -- recall that $F$ is also of constant sectional curvature) easily gives
$$g_{\tau}\Delta g_{\tau}\geq n\|A\|^2-\|A\|^4.$$
Hence, by taking $g_{\tau}$ in place of $g$ in (\ref{eq:estimating lambda 1 again}), we arrive at
$$\lambda_1\leq-\frac{\int_M(n+\tau)\|A\|^2dM}{\int_M(\|A\|^2+\tau)dM},$$
By letting $\tau\rightarrow 0$, and taking into account that $\int_M\|A\|^2dM>0$ (since $M$ is not totally geodesic), we get $\lambda_1\leq-n$.

In what concerns $\delta_1$, equation (\ref{operador l dois}) gives
\begin{eqnarray*}
\mathcal L_2(h)=-(\cos^2t)h''+n(\sin t\cos t)h'-(n+1)(\cos^2t)h,
\end{eqnarray*} 
so that (arguing as in the discussion that precedes the statement of Theorem \ref{instabilidade}) $\mathcal L_2(h)=\delta h$ is equivalent to
\begin{equation}\label{sturm problem}
((\cos^nt)h')'+(n+1)(\cos^nt)h+\delta(\cos^{n-2}t)h=0.
\end{equation} 
It now follows from (\ref{eq:Rayleigh quotient}) and (\ref{eq:variational characterization of delta 1}) that
$$\delta_1\leq\frac{\int_{-\epsilon}^0(\cos^nt)((h')^2-(n+1)h^2)dt}{\int_{-\epsilon}^0(\cos^{n-2}t)h^2dt},$$
for every $h\in C_0^{\infty}[-\epsilon,0]\setminus\{0\}$. 

By taking 
$$h(t)=\frac{\sin(\frac{\pi}{\epsilon}t)}{\sqrt{\cos^{n-2}t}}$$
(which satisfies the boundary conditions), direct computations show that $h(t)^2\cos^{n-2}t=\sin^2\left(\frac{\pi}{\epsilon}t\right)$,
\begin{equation}\nonumber
\begin{split}
(\cos^nt)h'(t)^2&\,=\frac{\pi^2}{\epsilon^2}\cos^2\left(\frac{\pi}{\epsilon}t\right)\cos^2t+\frac{(n-2)^2}{4}\sin^2\left(\frac{\pi}{\epsilon}t\right)\sin^2t\\
                  &\,\,\,\,\,\,\,+\frac{n-2}{4}\sin\left(\frac{2\pi}{\epsilon}t\right)\sin(2t),
\end{split}
\end{equation}
and
$$(n+1)(\cos^nt)h(t)^2=(n+1)(\cos^2t)\sin^2\left(\frac{\pi}{\epsilon}t\right).$$
Therefore,
$$\delta_1\leq\frac{I_1-I_2}{I_3},$$
where 
\begin{equation}\nonumber
\begin{split}
I_1&\,=\frac{\pi^2}{\epsilon^2}\int_{-\epsilon}^0\cos^2\left(\frac{\pi}{\epsilon}t\right)\cos^2tdt+\frac{(n-2)^2}{4}\int_{-\epsilon}^0\sin^2\left(\frac{\pi}{\epsilon}t\right)\sin^2tdt\\
   &\,\,\,\,\,\,\,+\frac{n-2}{4}\int_{-\epsilon}^0\sin\left(\frac{2\pi}{\epsilon}t\right)\sin(2t)dt,
\end{split}
\end{equation}
$$I_2=(n+1)\int_{-\epsilon}^0(\cos^2t)\sin^2\left(\frac{\pi}{\epsilon}t\right)dt\ \ \text{and}\ \ I_3=\int_{-\epsilon}^0\sin^2\left(\frac{\pi}{\epsilon}t\right)dt.$$

Finally, we observe that $\lim_{\epsilon\rightarrow\frac{\pi}{2}}I_1=\frac{\pi}{2}\left(1+\left(\frac{n-2}{4}\right)^2\right)$,
$\lim_{\epsilon\rightarrow\frac{\pi}{2}}I_2=(n+1)\frac{\pi}{8}$ and $\lim_{\epsilon\rightarrow\frac{\pi}{2}}I_3=\frac{\pi}{4}$, so that, for $\epsilon>0$ sufficiently close to $\frac{\pi}{2}$, we have
$$\lambda_1+\delta_1\leq\frac{n^2}{8}-2n+2.$$
Since this quadratic polynomial is negative for $2\leq n\leq 14$, we have proved the following result.



\begin{theorem}\label{extensao}
Let $M^n$ be a closed, oriented minimal hypersurface of $\mathbb S^{n+1}$. If $2\leq n\leq 14$ and $M^n$ is not totally geodesic, then $CM$ is a minimal unstable hypersurface of $\mathbb S^{n+2}$. 
\end{theorem}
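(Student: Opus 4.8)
The plan is to deduce the theorem from Theorem \ref{instabilidade} combined with two eigenvalue estimates. Since $M^n$ is minimal in $\mathbb S^{n+1}$, Corollary \ref{cone minimo} shows that the truncated cone $C_\epsilon M$ is a minimal hypersurface of $\mathbb S^{n+2}$ for every small $\epsilon>0$ (minimality being a pointwise condition, it also holds for $CM$ away from its vertices), and Theorem \ref{instabilidade} asserts that $C_\epsilon M$ is unstable precisely when $\lambda_1+\delta_1<0$, where $\lambda_1$ and $\delta_1$ are the first eigenvalues of the operators $\mathcal L_1$ and $\mathcal L_2$ of (\ref{operador l um}) and (\ref{operador l dois}) specialized to $c=1$ and $f(t)=\cos t$. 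Hence it suffices to produce, for a suitable $\epsilon$, test functions forcing $\lambda_1+\delta_1<0$; instability of a single $C_\epsilon M$ then forces instability of $CM$, since a normal variation of $C_\epsilon M$ supported away from $\partial(C_\epsilon M)$ extends by zero to a variation of $CM$ with the same negative second variation.

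For $\lambda_1$ I would apply the variational characterization (\ref{eq:variational characterization of lambda 1}) to the Simons-type test function $g_\tau=(\|A\|^2+\tau)^{1/2}$ with $\tau>0$. Because $\mathbb S^{n+1}$ has constant sectional curvature, Simons' formula for $\Delta(\|A\|^2)$ on a minimal hypersurface gives the pointwise bound $g_\tau\Delta g_\tau\ge n\|A\|^2-\|A\|^4$; substituting into the Rayleigh quotient (\ref{eq:Rayleigh quotient for L1}) yields
\[
\lambda_1\le-\frac{\int_M(n+\tau)\|A\|^2\,dM}{\int_M(\|A\|^2+\tau)\,dM}.
\]
Letting $\tau\to0^+$ and using $\int_M\|A\|^2\,dM>0$, which holds since $M$ is not totally geodesic, we obtain $\lambda_1\le-n$.

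For $\delta_1$, the equation $\mathcal L_2(h)=\delta h$ is the regular Sturm--Liouville problem (\ref{sturm problem}), and (\ref{eq:variational characterization of delta 1}) bounds $\delta_1$ above by the Rayleigh quotient (\ref{eq:Rayleigh quotient}) of any admissible $h$. The key choice is a test function that is both admissible on $[-\epsilon,0]$ and tractable in the limit $\epsilon\to\frac{\pi}{2}$; I would take $h(t)=\sin(\tfrac{\pi}{\epsilon}t)/\sqrt{\cos^{n-2}t}$, which vanishes at $t=-\epsilon$ and $t=0$ and satisfies $h(t)^2\cos^{n-2}t=\sin^2(\tfrac{\pi}{\epsilon}t)$. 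Expanding $(\cos^n t)(h')^2$ and $(n+1)(\cos^n t)h^2$ reduces the Rayleigh quotient to a ratio $(I_1-I_2)/I_3$ of explicit trigonometric integrals over $[-\epsilon,0]$. The only step requiring genuine computation — and thus the main obstacle — is to evaluate the limits $I_1\to\frac{\pi}{2}\bigl(1+(\tfrac{n-2}{4})^2\bigr)$, $I_2\to(n+1)\frac{\pi}{8}$ and $I_3\to\frac{\pi}{4}$ as $\epsilon\to\frac{\pi}{2}$, which yield $\delta_1\le\frac{n^2}{8}-n+2$ for $\epsilon$ sufficiently close to $\frac{\pi}{2}$. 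Combining this with $\lambda_1\le-n$ gives $\lambda_1+\delta_1\le\frac{n^2}{8}-2n+2$; since $n^2-16n+16<0$ exactly on $(8-4\sqrt3,\,8+4\sqrt3)$, this bound is negative for every integer $n$ with $2\le n\le14$. By Theorem \ref{instabilidade}, $C_\epsilon M$ — and hence $CM$ — is unstable, which is the assertion.
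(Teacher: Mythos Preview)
Your proposal is correct and follows essentially the same route as the paper: the same Simons-type test function $g_\tau=(\|A\|^2+\tau)^{1/2}$ to obtain $\lambda_1\le -n$, the same trial function $h(t)=\sin(\tfrac{\pi}{\epsilon}t)/\sqrt{\cos^{n-2}t}$ for $\delta_1$, and the same limiting evaluation as $\epsilon\to\frac{\pi}{2}$ leading to $\lambda_1+\delta_1\le\frac{n^2}{8}-2n+2$. Your added remark on passing from instability of $C_\epsilon M$ to that of $CM$ is a point the paper leaves implicit.
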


\end{document}